\newtheorem{defin}{Definition}
\newtheorem{lemma}{Lemma}
\newtheorem{prop}{Proposition}
\newtheorem{theo}{Theorem}
\newenvironment{proof}{\medskip\par\noindent{\bf Proof}}{\hfill $\Box$
\medskip\par}
\newcommand{\C}{\mathbb{C}}
\newcommand{\N}{\mathbb{N}}
\newcommand{\R}{\mathbb{R}}
\begin{document}
\title{Boundary layer expansions for initial value problems with two complex time variables}
\author{{\bf A. Lastra\footnote{The author is partially supported by the project MTM2016-77642-C2-1-P of Ministerio de Econom\'ia y Competitividad, Spain}, S. Malek\footnote{The author is partially supported by the project MTM2016-77642-C2-1-P of Ministerio de Econom\'ia y Competitividad, Spain.}}\\
University of Alcal\'{a}, Departamento de F\'{i}sica y Matem\'{a}ticas,\\
Ap. de Correos 20, E-28871 Alcal\'{a} de Henares (Madrid), Spain,\\
University of Lille 1, Laboratoire Paul Painlev\'e,\\
59655 Villeneuve d'Ascq cedex, France,\\
{\tt alberto.lastra@uah.es}\\
{\tt Stephane.Malek@math.univ-lille1.fr }}
\date{}
\maketitle
\thispagestyle{empty}
{ \small \begin{center}
{\bf Abstract}
\end{center}

We study a family of partial differential equations in the complex domain, under the action of a complex perturbation parameter $\epsilon$. We construct inner and outer solutions of the problem and relate them to asymptotic representations via Gevrey asymptotic expansions with respect to $\epsilon$, in adequate domains. The construction of such analytic solutions is closely related to the procedure of summation with respect to an analytic germ, put forward in~\cite{mosch18}, whilst the asymptotic representation leans on the cohomological approach determined by Ramis-Sibuya Theorem.

\medskip

\noindent Key words: asymptotic expansion, Borel-Laplace transform, Fourier transform, initial value problem, formal power series,
boundary layer, singular perturbation. 2010 MSC: 35C10, 35C20.}
\bigskip \bigskip

\section{Introduction}

The main aim in this work is to describe the analytic solutions and asymptotic behavior of the solutions of a family of initial value problems in the complex domain. Such family consists of partial differential equations in two complex time variables of the form
\begin{equation}
Q(\partial_z)u(t_1,t_2,z,\epsilon)=P(t_1^{k_1+1}\partial_{t_1}, t_2^{k_2+1}\partial_{t_2}, \partial_z,z,\epsilon)u(t_1,t_2,z,\epsilon)+f(t_1,t_2,z,\epsilon),\label{epralintro}
\end{equation}
under given initial data $u(0,t_2,z,\epsilon)\equiv u(t_1,0,z,\epsilon)\equiv 0$. Here, $Q(X)\in\C[X]$ and $P(T_1,T_2,Z,z,\epsilon)$ stands for a polynomial in $(T_1,T_2,Z)$ with holomorphic coefficients w.r.t. $(z,\epsilon)$ on $H_{\beta}\times D(0,\epsilon_0)$, where $H_{\beta}$ stands for the horizontal strip in the complex plane 
$$H_\beta:=\{z\in \C:|\hbox{Im}(z)|<\beta\},$$ 
for some $\beta>0$, and $D(0,\epsilon_0)\subseteq\C$ stands for the open disc centered at the origin with radius $\epsilon_0$, for some small $\epsilon_0>0$. The symbol $\epsilon$ acts as a small complex perturbation parameter in the equation. Moreover, $k_1,k_2$ are positive ingers with $1\le k_1< k_2$. The forcing term, constructed in detail in Section~\ref{sec1}, turns out to be a holomorphic function in $\C^2\times H_{\beta'}\times D(0,\epsilon_0)$. In this paper, we also adopt the notation $\overline{D}(0,r)$ for the closed disc centered at $0\in\C$ and radius $r>0$.

The precise constrains involving the parameters involved in each of the equations determining the family of PDEs under study is described in Section~\ref{sec1}.

\vspace{0.3cm}

This is the continuation of a series of works devoted to the study of PDEs in the complex domain under the action of two complex time variables. In~\cite{family1}, the authors have studied a family of nonlinear initial value Cauchy problems of the form
\begin{multline}\label{e11}
Q(\partial_z)\partial_{t_1}\partial_{t_2}u(t_1,t_2,z,\epsilon)=(P_1(\partial_z,\epsilon)u(t_1,t_2,z,\epsilon))(P_2(\partial_z,\epsilon)u(t_1,t_2,z,\epsilon))\\
+P(t_1,t_2,\partial_{t_1},\partial_{t_2},\partial_z,\epsilon)u(t_1,t_2,z,\epsilon)+f(t_1,t_2,z,\epsilon),
\end{multline} 
where the terms in $Q,P,P_1,P_2$ are such that the action of $t_1$ and $t_2$ is symmetric. Moreover, we assume that the polynomial 
\begin{equation}\label{e92}
\mathcal{P}(t_1,t_2,\partial_{t_1},\partial_{t_2},\partial_z,\epsilon):=Q(\partial_z)\partial_{t_1}\partial_{t_2}-L_1(t_1,t_2,\partial_{t_1},\partial_{t_2},\partial_z,\epsilon)
\end{equation}
where $L_1$ involves leading terms of the differential operator $P$, can be factorized in such a way that each of the factors only depend on one of the times variables, i.e.
$$\mathcal{P}(t_1,t_2,\partial_{t_1},\partial_{t_2},\partial_z,\epsilon)=\mathcal{P}_1(t_1,\partial_{t_1},\partial_z,\epsilon)\mathcal{P}_2(t_2,\partial_{t_2},\partial_z,\epsilon).$$
From this symmetric configuration one is able to construct families of analytic bounded solutions $u_{d_h}(t_1,t_2,z,\epsilon)\in\mathcal{O}_b(\mathcal{T}_1\times\mathcal{T}_2\times H_{\beta''}\times\mathcal{E}_h)$ for every $0\le h\le \iota-1$, where $\mathcal{T}_1,\mathcal{T}_2,\mathcal{E}_j$ stand for open finite sectors with vertex at the origin in $\C$, $\beta''>0$, and $(\mathcal{E}_h)_{0\le h\le \iota-1}$ is a good covering of $\C^\star$ (see Definition~\ref{defi4}). Moreover, an asymptotic behavior of such solutions can be observed with respect to the perturbation parameter $\epsilon$. Indeed, there exists a formal power series $\epsilon\mapsto\hat{u}(t_1,t_2,z,\epsilon)\in\mathbb{E}[[\epsilon]]$, where $\mathbb{E}$ stands for the Banach space of holomorphic and bounded functions defined in $\mathcal{T}_1\times\mathcal{T}_2\times H_{\beta''}$ with the norm of the supremum, which turns out to be a formal solution of (\ref{e11}). In addition to this, a multisummability result joins both, analytic and formal solutions (see~\cite{family1}, Theorem 2).

\vspace{0.3cm}

In the second study~\cite{family2}, the property of symmetry of the equations drops, and $\mathcal{P}$ in (\ref{e92}) is no longer factorizable into two terms which only present dependence on one of the time variables. This asymmetry causes that the procedure followed in~\cite{family1} is no longer valid in that second framework and the procedure followed differs from that in~\cite{family1}. 

In both studies, a Borel-Laplace method is applied. In the symmetric case, the analytic solution is constructed as the Laplace transform with respect to $\tau_1$ and $\tau_2$ of an auxiliary function $\omega(\tau_1,\tau_2)$, which is well defined in a domain of the form $(S_1\cup D(0,\rho))\times (S_2\cup D(0,\rho))$, for some $\rho>0$ and certain sectors $S_1,S_2$ with vertex at the origin. Moreover, such function admits an exponential growth at infinity with respect to $\tau_1\in S_1$ and $\tau_2\in S_2$. This is the suitable configuration in order to apply Borel-Laplace techniques on each of the variables involved and achieve summability results (see Section~\ref{sec51}). On the other hand the asymmetric settings in the problem considered in~\cite{family2} causes the function $\omega(\tau_1,\tau_2)$ only be defined in sets of the form $S_1\times (S_2\cup D(0,\rho))$, and a small divisor phenomena is observed. Therefore, the summability conditions are not satisfied and a different approach has to be followed, focused on studying the natural domains and asymptotic behavior of $\omega(\tau_1,\tau_2)$, and apply summability results asymmetrically.

\vspace{0.3cm}

In this sense, this work is concerned with a family of equations in which none of the previous strategies is satisfactory. On the one hand, the symmetric situation does not hold in the present work, so the strategy followed in~\cite{family1} is not available. On the other hand, the strategy considered in~\cite{family2} does not apply because the auxiliary function $\omega(\tau_1,\tau_2)$ requires that, at least for one of the variables, a neighborhood of the origin is contained in its domain of definition. This is not the case, so a summability procedure can not be followed. The reason for failure is that the deformation path accomplished when computing the difference of two consecutive solutions of the main problem, written as the Laplace transform $\omega$, is no longer applicable. A small divisor phenomena occurs which does not allow to determine the Gevrey orders involved in the relationship between the analytic and the formal solution. The precise reasoning on the failure of this procedure is detailed in Section~\ref{secfirstapproach}.

\vspace{0.3cm}

A second novelty in the present work is the appearance of two different kinds of families of analytic solutions of the main problem for which one can give a picture of their asymptotic behavior with respect to the perturbation parameter. Following the terminology in the study of boundary layer solutions of equations, we distinguish the inner solutions (see Section~\ref{secin})  and the outer solutions (see Section~\ref{secout}) of the main problem, and describe their asymptotic representation with respect to the perturbation parameter near the origin. A recent work by the authors~\cite{lamaboundarylayer} constructs boundary layer expansions for certain initial value problem with merging turning points, regarding inner and outer solutions, which only considers the action of one time variable in the equation. In that previous work and also in the present work, the Gevrey orders of the asymptotic representation of the inner and outer solutions are different in general. As mentioned, we observe a comparable phenomenon in the present situation. However, in our context the inner solutions might not be $\lambda_1k_1-$summable in general, for some $\lambda_1>0$ to be precised.

The so-called inner and outer expansions are of great interest in mathematics, under the theory of matched asymptotic expansions. For a detailed theory on this subject we refer to classical textbooks such as~\cite{beor,ec,la,om,sk,wa}. For the general aspects on Gevrey asymptotic expansions in this context, we refer to the book~\cite{frsc}.

\vspace{0.3cm}

The study of singularly perturbed PDEs in the complex domain is a topic of increasing interest. In 2015, H. Yamazawa and M. Yoshino~\cite{yayo} studied parametric Borel summability in semilinear systems of PDEs of fuchsian type, 
 and of combined irregular and fuchsian type by M. Yoshino~\cite{yo}.

\vspace{0.3cm}

The theory of monomial summability was put forward by M. Canalis-Durand, J. Mozo-Fern\'andez and R. Sch\"afke in~\cite{camosch}. Recently, S.A. Carrillo and J. Mozo-Fern\'andez have studied further properties on monomial summability and Borel-Laplace methods on this theory in~\cite{camo1,camo2}, and this technique has been successfully applied to families of singularly perturbed ODEs and PDEs. A step further is given by J. Mozo-Fern\'andez and R. Sch\"afke in~\cite{mosch18}, where the authors put forward novel Gevrey asymptotic expansions and summability with respect to an analytic germ, and apply their technique to different families of PDEs and ODEs. In the present work, we make use of a similar approach to search for solutions of our main problem in the form of a Laplace-like transform with a meromorphic kernel at 0 (see Remark at page~\pageref{aaa} and (\ref{e285})).

\vspace{0.3cm}

We now give a general overview of the sections in which the present study is divided, and the main results obtained. The statement of the main problem under consideration is settled in Section~\ref{sec1}, where we give arguments on the reason of failure of the methods used in~\cite{family1,family2} in this family of PDEs (see Section~\ref{secfirstapproach}) and determine the shape of the analytic solution as a Laplace-like transform of a function related to the meromorphic kernel $\Omega$ provided in (\ref{e285}) (see Section~\ref{subsec2}): given two good coverings of $\C^\star$, $(\mathcal{E}_{h_1}^0)_{0\le h_1\le \iota_1-1}$ and $(\mathcal{E}_{h_2}^\infty)_{0\le h_2\le \iota_2-1}$, with the first good covering consisting of sectors with wide enough opening (see Definition~\ref{defi4}), we construct two sets of analytic solutions of (\ref{epralintro}) in the form
\begin{multline*}
u_{\xi_{h_j}}(t_1,t_2,z,\epsilon)\\
:=\frac{1}{(2\pi)^{1/2}}\int_{-\infty}^{\infty}\int_{L_{d_{h_j}}}\omega(u,m,\epsilon)\exp\left(-\left(\frac{u}{\epsilon^{\lambda_1}t_1}\right)^{k_1}-\left(\frac{u}{\epsilon^{\lambda_2}t_2}\right)^{k_2}\right)\exp(izm)\frac{du}{u}dm, \quad j=1,2.
\end{multline*}
The elements of the first family are constructed on a domain of the form
$\mathcal{T}_1\times(\mathcal{T}_2\cap D(0,\rho_2))\times H_{\beta'}\times\mathcal{E}^0_{h_1}$ for some $\rho_2>0$. The elements in the second family are constructed on domains of the form $\mathcal{T}_1\times\mathcal{T}_{2,\epsilon}\times H_{\beta'}\times\mathcal{E}^\infty_{h_2}$. Here, $\mathcal{T}_1$ is a finite sector, $\mathcal{T}_2$ is an infinite sector and the direction $\xi_h\in\R$ is an appropriate argument, the set $\mathcal{T}_{2,\epsilon}$ is a bounded sector which depends on $\epsilon\in\mathcal{E}^{\infty}_{h_2}$ and tends to infinity with $\epsilon\to 0$; $\lambda_1,\lambda_2\in\N$. The function $\omega(u,m,\epsilon)$ comes as a result of a fixed point argument in the Borel plane, in certain Banach spaces of functions (see Section~\ref{subsec3} and Section~\ref{subsec4}). We finally relate the analytic solutions to an asymptotic representation in different subdomains achieving the construction and asymptotic results on the inner solutions (see Section~\ref{secin}) and on the outer solutions (see Section~\ref{secout}). In both situations, we provide differences of consecutive solutions (in the sense that they are associated to consecutive sectors in the fixed good covering), and apply Ramis-Sibuya theorem (see Theorem (RS)) to arrive at the existence of a common asymptotic representation of all inner solutions and an asymptotic representation of all the outer solutions. These asymptotic behavior appears in the form of Gevrey asymptotic expansions of order $1/(\lambda_1 k_1)$ with respect to the perturbation parameter $\epsilon$ on $\mathcal{E}_{h_2}^\infty$ regarding the inner solutions, whilst $\lambda_2k_2-$Gevrey summability can be observed regarding the outer solutions, with respect to the perturbation parameter (see Theorem~\ref{lema495b}).

\vspace{0.3cm}

The paper is organized as follows.\\
In Section~\ref{seclap}, we recall some definitions and results on Laplace transform, asymptotic expansions and Fourier transform. In Section~\ref{sec1}, we state the main problem under study and analyze different ways to approach the problem. The section ends with the construction of the solution to an auxiliary problem in the Borel plane within a Banach space of functions with exponential growth and decay. Section~\ref{sec4} is devoted to the construction of the analytic solution of the main problem in addition to the inner and outer solutions. The work ends in Section~\ref{lastsec} with the study of the parametric Gevrey asymptotic expansions of both types of solutions in appropriate domains, with respect to the perturbation parameter. The last section is focused on the technical proof of Lemma~\ref{lema3}, left at the end of the paper for the sake of clarity in the ongoing argumentation.

\section{Laplace transform, asymptotic expansions and Fourier transform}\label{seclap}

In this section, we recall the main definitions and results involved in the theory of Borel summable formal power series with coefficients in a fixed complex Banach space $( \mathbb{E}, \left\|\cdot\right\|_{\mathbb{E}} )$. For a detailed description on the classical theory, we refer to \cite{ba}, Section 3.2. For the sake of simplicity, we have decided to make use of a slightly modified version of the classical theory. This slightly modified version of the $k-$Borel transform has been used in previous works by the authors such as~\cite{lama,lama1,lama2}, and in the study of singularly perturbed families of equations such as~\cite{family1}.

\begin{defin} Let $k \geq 1$ be an integer. For every $n\ge1$, we define $ m_{k}(n):= \Gamma(\frac{n}{k})$. A formal power series $\hat{f}(t) = \sum_{n=1}^{\infty}  f_{n}t^{n} \in t\mathbb{E}[[t]]$ is said to be $m_{k}-$summable with respect to $t$ in the direction $d \in [0,2\pi)$ if \medskip

{\bf i)} there exists $\rho>0$ such that the formal power series, 
$$ \mathcal{B}_{m_k}(\hat{f})(\tau) = \sum_{n=1}^{\infty} \frac{ f_{n} }{ \Gamma(\frac{n}{k}) } \tau^{n}
\in \tau\mathbb{E}[[\tau]],$$
known as the formal $m_{k}-$Borel transform of $\hat{f}$, is absolutely convergent in $D(0,\rho)$. \medskip

{\bf ii)} there exists $\delta > 0$ such that the function $\mathcal{B}_{m_k}(\hat{f})(\tau)$, which in principal is only defined on some neighborhood of the origin, can be analytically continued with
respect to $\tau$ in a sector
$S_{d,\delta} = \{ \tau \in \mathbb{C}^{\ast} : |d - \mathrm{arg}(\tau) | < \delta \} $. Moreover, there exist $C,K >0$
such that
$$ \left\|\mathcal{B}_{m_k}(\hat{f})(\tau)\right\|_{\mathbb{E}}
\leq C e^{ K|\tau|^{k} },\quad \tau \in S_{d, \delta}.$$
\end{defin}

Under the previous hypotheses, the vector valued Laplace transform of $\mathcal{B}_{m_k}(\hat{f})(\tau)$ in the direction $d$ is defined by
$$ \mathcal{L}^{d}_{m_k}(\mathcal{B}_{m_k}(\hat{f}))(t) = k \int_{L_{\gamma}}
\mathcal{B}_{m_k}(\hat{f})(u) e^{ - ( u/t )^{k} } \frac{d u}{u}.$$
The integration path consists of the half-line $L_{\gamma} = (0,\infty)e^{\gamma\sqrt{-1}} \subset S_{d,\delta} \cup \{ 0 \}$, where $\gamma$ depends on
$T$ and is chosen in such a way that $\cos(k(\gamma - \mathrm{arg}(t))) \geq \delta_{1} > 0$, for some fixed $\delta_{1}$.
The function $\mathcal{L}^{d}_{m_k}(\mathcal{B}_{m_k}(\hat{f}))(t)$ is well defined, holomorphic and bounded in any sector
$$ S_{d,\theta,R^{1/k}} = \{ T \in \mathbb{C}^{\ast} : |t| < R^{1/k} \ \ , \ \ |d - \mathrm{arg}(t) | < \theta/2 \},$$
where $\frac{\pi}{k} < \theta < \frac{\pi}{k} + 2\delta$ and
$0 < R < \delta_{1}/K$. This function is called the $m_{k}-$sum of the formal series $\hat{f}(t)$ in the direction $d$.\medskip

\noindent We use the notation $\mathcal{B}_{m_k,t}$ (resp. $\mathcal{L}^d_{m_k,t}$) to emphasize that $t$ is the variable with respect to which Borel (resp. Laplace) transform is applied, if necessary.

The following result holds regarding the properties of the $m_{k}-$Borel transform.

\begin{prop}\label{prop8}  Let $\hat{f}(t) = \sum_{ n \geq 1} f_{n}t^{n}\in\mathbb{E}[[t]]$, $\hat{g}(t) = \sum_{n \geq 1} g_{n}t^{n}\in\mathbb{E}[[t]]$. Let $k \geq 1$ be an integer number. Then, the following formal identity holds:
\begin{equation}
\mathcal{B}_{m_k}(t^{k+1}\partial_{t}\hat{f}(t))(\tau) = k \tau^{k} \mathcal{B}_{m_k}(\hat{f}(t))(\tau) \label{Borel_diff}
\end{equation}
\end{prop}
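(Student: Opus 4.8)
The plan is to verify the identity coefficient by coefficient in $\tau$, since both members are formal power series and no question of convergence intervenes here. First I would compute the left-hand side explicitly. Differentiating $\hat{f}(t)=\sum_{n\ge 1}f_n t^n$ gives $\partial_t\hat{f}(t)=\sum_{n\ge 1}n f_n t^{n-1}$, and multiplying by $t^{k+1}$ yields $t^{k+1}\partial_t\hat{f}(t)=\sum_{n\ge 1}n f_n t^{n+k}$. Applying the $m_k$-Borel transform term by term and keeping track of the exponent $n+k$, I obtain
$$\mathcal{B}_{m_k}(t^{k+1}\partial_t\hat{f}(t))(\tau)=\sum_{n\ge 1}\frac{n f_n}{\Gamma\!\left(\frac{n+k}{k}\right)}\tau^{n+k}.$$

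Next I would expand the right-hand side directly from the definition of the $m_k$-Borel transform, which gives
$$k\tau^k\mathcal{B}_{m_k}(\hat{f}(t))(\tau)=k\sum_{n\ge 1}\frac{f_n}{\Gamma\!\left(\frac{n}{k}\right)}\tau^{n+k}.$$
Comparing the coefficient of $\tau^{n+k}$ on the two sides, it then remains only to check the scalar identity $\dfrac{n}{\Gamma(\frac{n+k}{k})}=\dfrac{k}{\Gamma(\frac{n}{k})}$ for every integer $n\ge 1$, after which matching coefficients establishes the claimed formal equality.

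The sole piece of genuine content — elementary, but the crux of the computation — is this last scalar identity, which rests on the functional equation of the Gamma function. Writing $\frac{n+k}{k}=\frac{n}{k}+1$ and invoking $\Gamma(x+1)=x\,\Gamma(x)$ with $x=\frac{n}{k}$ gives $\Gamma\!\left(\frac{n+k}{k}\right)=\frac{n}{k}\,\Gamma\!\left(\frac{n}{k}\right)$, whence $\dfrac{n}{\Gamma(\frac{n+k}{k})}=\dfrac{n}{\frac{n}{k}\Gamma(\frac{n}{k})}=\dfrac{k}{\Gamma(\frac{n}{k})}$, as needed. I expect no real obstacle: the whole argument is a term-by-term manipulation of formal series, and it hinges entirely on the recursion $m_k(n+k)=\frac{n}{k}\,m_k(n)$ satisfied by the moment sequence $m_k(n)=\Gamma(\frac{n}{k})$, which is exactly what makes the factor $k\tau^k$ the Borel-plane counterpart of the irregular operator $t^{k+1}\partial_t$.
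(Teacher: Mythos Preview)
Your proof is correct; the paper itself states this proposition as a recalled standard property and gives no proof. Your coefficient-by-coefficient verification via the functional equation $\Gamma\!\left(\frac{n}{k}+1\right)=\frac{n}{k}\,\Gamma\!\left(\frac{n}{k}\right)$ is exactly the natural argument one would supply.
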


We also recall some classical properties of inverse Fourier transform, which are used in our construction.
\begin{defin} Let $\beta, \mu \in \mathbb{R}$. We denote by
$E_{(\beta,\mu)}$ the vector space of continuous functions $h : \mathbb{R} \rightarrow \mathbb{C}$ such that
$$ \left\|h(m)\right\|_{(\beta,\mu)} = \sup_{m \in \mathbb{R}} (1+|m|)^{\mu} \exp( \beta |m|) |h(m)| $$
is finite. The space $E_{(\beta,\mu)}$ equipped with the norm $\left\|\cdot\right\|_{(\beta,\mu)}$ is a Banach space.
\end{defin}

\begin{prop}\label{prop359}
Let $f \in E_{(\beta,\mu)}$ with $\beta > 0$, $\mu > 1$. The inverse Fourier transform of $f$, given by
$$ \mathcal{F}^{-1}(f)(x) = \frac{1}{ (2\pi)^{1/2} } \int_{-\infty}^{+\infty} f(m) \exp( ixm ) dm,\quad x\in\mathbb{R},$$
can be extended to an analytic function on the horizontal strip
\begin{equation}
H_{\beta} = \{ z \in \mathbb{C} / |\mathrm{Im}(z)| < \beta \}. \label{strip_H_beta}
\end{equation}
Let $\phi(m) := im f(m) \in E_{(\beta,\mu - 1)}$. Then, the following statements hold:
\begin{itemize}
\item[a)] $\partial_{z} \mathcal{F}^{-1}(f)(z) = \mathcal{F}^{-1}(\phi)(z)$, for $z \in H_{\beta}$.
\item[b)] Let $g\in E_{(\beta,\mu)}$, and consider the convolution product of $f$ and $g$, 
$$\psi(m):=\frac{1}{(2\pi)^{1/2}}\int_{-\infty}^{\infty}f(m-m_1)g(m_1)dm_1.$$
Then, $\psi\in E_{(\beta,\mu)}$ and $\mathcal{F}^{-1}(f)(z)\mathcal{F}^{-1}(g)(z)=\mathcal{F}^{-1}(\psi)(z),\quad z\in H_\beta$.
\end{itemize}
\end{prop}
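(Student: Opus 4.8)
The plan is to establish the three assertions in turn, in each case exploiting the exponential weight built into membership in $E_{(\beta,\mu)}$. For the analytic extension, I would fix $z=x+iy$ with $|y|<\beta$ and note that, since $|\exp(izm)|=\exp(-ym)\le\exp(|y||m|)$,
$$|f(m)\exp(izm)|\le \left\|f\right\|_{(\beta,\mu)}(1+|m|)^{-\mu}\exp(-(\beta-|y|)|m|).$$
Because $\beta-|y|>0$, the right-hand side is integrable in $m$, and the majorant is uniform for $z$ ranging over any compact subset of $H_\beta$. Since $z\mapsto f(m)\exp(izm)$ is entire for each fixed $m$, a standard argument (continuity by dominated convergence, vanishing of triangle integrals by Fubini together with Cauchy's theorem applied to the entire integrand, then Morera) shows that $\mathcal{F}^{-1}(f)$ extends to a holomorphic function on $H_\beta$.

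For part a) I would differentiate under the integral sign, which is legitimate by the same type of dominated bound applied to $\partial_z[f(m)\exp(izm)]=imf(m)\exp(izm)$, whose integrable majorant on compacts of $H_\beta$ is $\left\|f\right\|_{(\beta,\mu)}|m|(1+|m|)^{-\mu}\exp(-(\beta-|y|)|m|)$. The membership $\phi=imf\in E_{(\beta,\mu-1)}$ is immediate from $|m|\le 1+|m|$, which gives $\left\|\phi\right\|_{(\beta,\mu-1)}\le\left\|f\right\|_{(\beta,\mu)}$, so the differentiated integral is precisely $\mathcal{F}^{-1}(\phi)$.

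For part b) the first task is the norm bound $\psi\in E_{(\beta,\mu)}$. Starting from $|\psi(m)|\le(2\pi)^{-1/2}\int|f(m-m_1)||g(m_1)|\,dm_1$ and inserting the pointwise decay of $f$ and $g$, I would apply the subadditivity $\exp(\beta|m|)\le\exp(\beta|m-m_1|)\exp(\beta|m_1|)$ so that the exponential weights cancel against the decay of $f$ and $g$ to leave a factor $\le 1$, reducing the estimate to the purely polynomial quantity
$$(1+|m|)^{\mu}\int_{-\infty}^{\infty}\frac{dm_1}{(1+|m-m_1|)^{\mu}(1+|m_1|)^{\mu}}.$$
The crux is the classical Peetre-type inequality stating that this is bounded uniformly in $m$ exactly when $\mu>1$, proved by splitting the line according to whether $|m_1|\le|m|/2$ or not. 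This delivers $\left\|\psi\right\|_{(\beta,\mu)}\le C\left\|f\right\|_{(\beta,\mu)}\left\|g\right\|_{(\beta,\mu)}$. The same absolute-convergence estimate justifies Fubini's theorem, and the ensuing interchange of integrations yields the convolution identity $\mathcal{F}^{-1}(f)(z)\mathcal{F}^{-1}(g)(z)=\mathcal{F}^{-1}(\psi)(z)$ first for real $z$; since all three functions are holomorphic on $H_\beta$ by the first part, the identity then propagates to the whole strip by analytic continuation.

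I expect the main obstacle to be the convolution estimate of part b): the careful separation of the exponential and polynomial weights, and in particular the uniform bound on the polynomial convolution integral, which is the single place where the hypothesis $\mu>1$ is genuinely needed. By contrast, the analytic extension and the differentiation identity are routine once an integrable dominating majorant is exhibited.
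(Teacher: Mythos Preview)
Your argument is correct and is precisely the standard route for these classical facts. Note that the paper itself does not supply a proof of this proposition: it is stated as a recall of well-known properties of the inverse Fourier transform, so there is no ``paper's proof'' to compare against. Your outline fills in what the paper omits, and every step (the dominated majorant for analyticity and differentiation, the triangle inequality on the exponential weight, and the Peetre-type splitting $|m_1|\le|m|/2$ versus $|m_1|>|m|/2$ for the polynomial convolution bound) is the expected one.
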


\section{Statement of the main problem and solution of an auxiliary problem}\label{sec1}

Our main aim in this work is to provide analytic and formal solutions to the main problem under study (\ref{epralintro}), and give information about the asymptotic behavior relating both. In this section, we detail the elements involved in the main problem under study, and provide different approaches which might be followed in order to search analytic and asymptotically related formal solutions.

Let $1\le k_1<k_2$, and $D_1,D_2\ge 2$ be integer numbers. We also fix $\lambda_1,\lambda_2\in\N$. For $1\le \ell_1\le D_1$ and $1\le \ell_2\le D_2$, we consider non negative integers $\delta_{\ell_1},\delta_{\ell_2}$ and $\Delta_{\ell_1\ell_2}$. 

We assume that 
\begin{equation}\label{e193}
\Delta_{D_1D_2}=\lambda_1k_1\delta_{D_1}+\lambda_2k_2\delta_{D_2},\quad \lambda_2k_2>\lambda_1k_1,
\end{equation}
\begin{multline}
\Delta_{\ell_1\ell_2}> \lambda_1k_1\delta_{\ell_1}+ \lambda_2k_2\delta_{\ell_2},\quad k_1\delta_{D_1}+ k_2\delta_{D_2}\ge k_1\delta_{\ell_1}+ k_2\delta_{\ell_2},\\ 
1\le \ell_1\le D_1-1,1\le \ell_2\le D_2-1.\label{e209}
\end{multline}


We consider polynomials with complex coefficients $Q, R_{D_1D_2}$, and $R_{\ell_1\ell_2}$, for every $1\le \ell_1\le D_1-1$ and $1\le \ell_2\le D_2-1$. We assume that 
\begin{equation}\label{e203}
\frac{Q(im)}{R_{D_1D_2}(im)}\in A_{Q,R_{D_1D_2}},\quad m\in\R,
\end{equation}
where $A_{Q,R_{D_1},R_{D_2}}$ stands for the sectorial annulus
$$A_{Q,R_{D_1D_2}}=\left\{z\in \C:r^1_{Q,R_{D_1D_2}}\le|z|\le r^2_{Q,R_{D_1D_2}},\quad\hbox{arg}(z)\in (\alpha_{Q,R_{D_1D_2}},\beta_{Q,R_{D_1D_2}})\right\},$$
for some $0<r^1_{Q,R_{D_1D_2}}<r^2_{Q,R_{D_1D_2}}$ and $\alpha_{Q,R_{D_1D_2}},\beta_{Q,R_{D_1D_2}}\in\R$, with $\alpha_{Q,R_{D_1D_2}}<\beta_{Q,R_{D_1D_2}}$. In addition to that, we assume
\begin{equation}\label{e220}
\hbox{deg}(R_{\ell_1\ell_2})\le \hbox{deg}(R_{D_1D_2}),\quad 1\le \ell_1\le D_1-1, 1\le \ell_2\le D_2-1,\quad R_{D_1D_2}(im)\neq0,\quad m\in\R.
\end{equation}

We consider the main initial value problem under study 
\begin{multline}
Q(\partial_z)u(t_1,t_2,z,\epsilon)=\epsilon^{\Delta_{D_1D_2}}(t_1^{k_1+1}\partial_{t_1})^{\delta_{D_1}}(t_2^{k_2+1}\partial_{t_2})^{\delta_{D_2}}R_{D_1D_2}(\partial_z)u(t_1,t_2,z,\epsilon)\\
+\sum_{\substack{1\le\ell_1\le D_1-1\\1\le\ell_2\le D_2-1}}\epsilon^{\Delta_{\ell_1\ell_2}}(t_1^{k_1+1}\partial_{t_1})^{\delta_{\ell_1}}(t_2^{k_2+1}\partial_{t_2})^{\delta_{\ell_2}}c_{\ell_1\ell_2}(z,\epsilon)R_{\ell_1\ell_2}(\partial_z)u(t_1,t_2,z,\epsilon)+f(t_1,t_2,z,\epsilon),\label{epral}
\end{multline}
under given initial conditions $u(0,t_2,z,\epsilon)\equiv 0$, and $u(t_1,0,z,\epsilon)\equiv 0$. 

Let $\epsilon_0>0$. For every $0\le \ell_1\le D_1-1$ and $0\le \ell_2\le D_2-1$, the functions $c_{\ell_1\ell_2}(z,\epsilon)$ are holomorphic on $H_{\beta}\times D(0,\epsilon_0)$. They are defined by
$$c_{\ell_1\ell_2}(z,\epsilon)=\frac{1}{(2\pi)^{1/2}}\int_{-\infty}^{\infty}C_{\ell_1\ell_2}(m,\epsilon)e^{izm}dm,$$
where $m\mapsto C_{\ell_1\ell_2}(m,\epsilon)\in E_{(\beta,\mu)}$ and satisfies uniform bounds with respect to $\epsilon\in D(0,\epsilon_0)$. More precisely, there exists $\mathcal{C}_{\ell_1\ell_2}>0$ such that
\begin{equation}\label{e286v2}
\sup_{\epsilon\in D(0,\epsilon_0)}\left\|C_{\ell_1\ell_2}(m,\epsilon)\right\|_{(\beta,\mu)}\le \mathcal{C}_{\ell_1\ell_2}.
\end{equation}

The function $f$ is constructed as follows. Assume that $\psi:\C\times \R\times D(0,\epsilon_0)\to\C$ represents an entire function with respect to the first variable, continuous in $\R$ with respect to the second one, and holomorphic with respect to the third variable on the disc $D(0,\epsilon_0)$. Moreover, there exist $C_{\psi},\beta,\mu,\nu\in\R$, with $C_{\psi,}\beta,\nu>0$ and 
$$\mu>1+\hbox{deg}(R_{\ell_1\ell_2}),\quad 1\le \ell_1\le D_1-1,1\le \ell_2\le D_2-1,$$
such that the next bound holds
\begin{equation}\label{e319}
|\psi(\tau,m,\epsilon)|\le \frac{C_{\psi}}{(1+|m|)^{\mu}}e^{-\beta|m|}\exp\left(\nu|\tau|^{k'}\right)|\tau|,
\end{equation}
for all $(\tau,m,\epsilon)\in \C\times \R\times D(0,\epsilon_0)$, and some $k_1<k'<k_2$. In this situation, it is straight to check that the function 
\begin{equation}\label{e231}
F(T_1,T_2,m,\epsilon):=\int_{L_\gamma}\psi(u,m,\epsilon)\exp\left(-\left(\frac{u}{T_1}\right)^{k_1}-\left(\frac{u}{T_2}\right)^{k_2}\right)\frac{du}{u},
\end{equation}
where $L_\gamma=[0,\infty)e^{\gamma \sqrt{-1}}$ can spin around the origin in order to guarantee that $F$ is a holomorphic function on $\C^2$ with respect to $(T_1,T_2)$ by analytic continuation. We define
\begin{equation}\label{e233}
f(t_1,t_2,z,\epsilon)=\mathcal{F}^{-1}\left(m\mapsto F(\epsilon^{\lambda_1}t_1,\epsilon^{\lambda_1}t_2,m,\epsilon)\right)(z).
\end{equation}
Therefore, $f$ is a holomorphic function in $\C^2\times H_{\beta'}\times D(0,\epsilon_0)$, for every $0<\beta'<\beta$. The reason why the forcing term is built in such a restrictive manner will be put into light later on in Section~\ref{subsec2}.

In this framework, we search for solutions of (\ref{epral}) of the form
\begin{equation}\label{e186}
u(t_1,t_2,z,\epsilon)=\mathcal{F}^{-1}(m\mapsto U(\epsilon^{\lambda_1}t_1,\epsilon^{\lambda_2}t_2,m,\epsilon))(z).
\end{equation}
In view of (\ref{e193}), the properties of Fourier inverse transform, and the definition of $f$ and $c_{\ell_1\ell_2}$, we get that the expression $U(T_1,T_2,m,\epsilon)$ turns out to be a solution of
\begin{multline}
Q(im)U(T_1,T_2,m,\epsilon)=(T_1^{k_1+1}\partial_{T_1})^{\delta_{D_1}}(T_2^{k_2+1}\partial_{T_2})^{\delta_{D_2}}R_{D_1D_2}(im)U(T_1,T_2,m,\epsilon)\\
+\sum_{\substack{1\le\ell_1\le D_1-1\\1\le\ell_2\le D_2-1}}\epsilon^{\Delta_{\ell_1\ell_2}-\lambda_1 k_1\delta_{\ell_1}-\lambda_2 k_2\delta_{\ell_2}}(T_1^{k_1+1}\partial_{T_1})^{\delta_{\ell_1}}(T_2^{k_2+1}\partial_{T_2})^{\delta_{\ell_2}}\\
\times \frac{1}{(2\pi)^{1/2}}\int_{-\infty}^{\infty}C_{\ell_1\ell_2}(m-m_1,\epsilon)R_{\ell_1\ell_2}(im_1)U(T_1,T_2,m_1,\epsilon)dm_1+F(T_1,T_2,m,\epsilon).\label{epral1}
\end{multline}

\subsection{A first approach}\label{secfirstapproach}

As a first approach, one is tempted to follow techniques used in the previous works of the authors dealing with singularly perturbed partial differential equations in two complex time variables. On the one hand, the family of equations studied in~\cite{family1} shows a symmetric role of the time variables in the equation. Although this is the case for (\ref{epral}), in that previous study it holds that the principal part of any of the equations in the family is factorizable as a product of two operators which split the dependence on the time variables. For this reason, that procedure is no longer valid in the present framework. On the other hand, the study made in \cite{family2} does not fit the main problem under study, as it can be deduced from the forthcoming argument.

Departing from the auxiliary equation (\ref{epral1}), we proceed to apply $m_{k_1}-$Borel transformation with respect to $T_1$ and $m_{k_2}-$Borel transformation with respect to $T_2$. Then, it holds that
$$\omega(\tau_1,\tau_2,m,\epsilon):=\mathcal{B}_{m_{k_2},T_2}\mathcal{B}_{m_{k_1},T_1}(U(T_1,T_2,m,\epsilon))$$
solves an equation of the form
\begin{multline}
\left(Q(im)-(k_1\tau_1^{k_1})^{\delta_{D_1}}(k_2\tau_2^{k_2})^{\delta_{D_2}}R_{D_1D_2}(im)\right)\omega(\tau_1,\tau_2,m,\epsilon)\\
=\sum_{\substack{1\le\ell_1\le D_1-1\\1\le\ell_2\le D_2-1}}\epsilon^{\Delta_{\ell_1\ell_2}-\lambda_1 k_1\delta_{\ell_1}-\lambda_2 k_2\delta_{\ell_2}}(k_1\tau_1^{k_1})^{\delta_{\ell_1}}(k_2\tau_2^{k_2})^{\delta_{\ell_2}}\\
\frac{1}{(2\pi)^{1/2}}\int_{-\infty}^{\infty}C_{\ell_1\ell_2}(m-m_1,\epsilon)R_{\ell_1\ell_2}(im_1)\omega(\tau_1,\tau_2,m_1,\epsilon)dm_1+\tilde{F}(\tau_1,\tau_2,m,\epsilon),\label{epral2}
\end{multline}
where
$$\tilde{F}(\tau_1,\tau_2,m,\epsilon):=\mathcal{B}_{m_{k_1},T_1}\mathcal{B}_{m_{k_2},T_2}F(T_1,T_2,m,\epsilon)$$
is a holomorphic function on $\C^2$ w.r.t. the first variables, on $D(0,\epsilon_0)$ w.r.t. the fourth, and continuous on $\R$ w.r.t. the second variable, in view of (\ref{e231}).
     
Let 
\begin{equation}\label{e234}
P_m(\tau_1,\tau_2):=Q(im)-(k_1\tau_1^{k_1})^{\delta_{D_1}}(k_2\tau_2^{k_2})^{\delta_{D_2}}R_{D_1D_2}(im).
\end{equation}


\begin{prop}\label{prop0}
Under the previous construction leading to (\ref{epral2}), it holds that the possible actual holomorphic solutions $\omega(\tau_1,\tau_2,m,\epsilon)$ of (\ref{epral2}) can not be defined on any set of the form $(S_1\cup D(0,\rho))\times S_2$ (resp. $S_1\times(S_2\cup D(0,\rho))$) with respect to $(\tau_1,\tau_2)$, for any $\rho>0$ and any infinite sectors $S_1,S_2$ with vertex at the origin in $\C$.  
\end{prop}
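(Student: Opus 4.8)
The plan is to reduce the statement to an analysis of the zero locus of the principal symbol $P_m(\tau_1,\tau_2)$ introduced in (\ref{e234}), and to show that this locus unavoidably penetrates any domain of the two prescribed shapes, where it forces $\omega$ to develop a genuine pole. From (\ref{e234}), the equation $P_m(\tau_1,\tau_2)=0$ is equivalent to
$$\tau_1^{k_1\delta_{D_1}}\tau_2^{k_2\delta_{D_2}} = c(m), \qquad c(m):=\frac{Q(im)}{k_1^{\delta_{D_1}}k_2^{\delta_{D_2}}R_{D_1D_2}(im)}.$$
By the annulus hypothesis (\ref{e203}) together with (\ref{e220}), the quantity $c(m)$ is well defined, nonzero, with $|c(m)|$ confined to a fixed compact subinterval of $(0,\infty)$ and $\arg c(m)$ to a fixed bounded range, uniformly in $m\in\R$. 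Write $Z_m$ for this zero locus.

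The geometric core is to show that $Z_m$ meets the domain $(S_1\cup D(0,\rho))\times S_2$. I would fix $\arg\tau_2$ inside the infinite sector $S_2$ and take $|\tau_2|$ large; solving the displayed resonance for $\tau_1$ produces $k_1\delta_{D_1}$ roots of modulus $(|c(m)|/|\tau_2|^{k_2\delta_{D_2}})^{1/(k_1\delta_{D_1})}$, which tends to $0$ as $|\tau_2|\to\infty$. Hence for $|\tau_2|$ large but finite these roots lie in $D(0,\rho)$, and since $D(0,\rho)$ is a full punctured neighbourhood of the origin, every argument needed to match the resonance is available; thus at least one such point $(\tau_1^\ast,\tau_2^\ast)$ belongs to the domain, and in fact one obtains a whole arc of $Z_m$ inside it accumulating at $\tau_1=0$. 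The case $S_1\times(S_2\cup D(0,\rho))$ is treated symmetrically, fixing $\arg\tau_1\in S_1$, letting $|\tau_1|\to\infty$, so that $\tau_2\to0$ lands in $D(0,\rho)$.

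Next I would argue by contradiction. If $\omega$ were holomorphic on such a domain, then the left-hand side $P_m\,\omega$ of (\ref{epral2}) vanishes along $Z_m$, so the whole right-hand side vanishes at every point of $Z_m$ inside the domain, for every (small) $\epsilon$. Fixing one zero $(\tau_1^\ast,\tau_2^\ast)$ provided above, I would let $\epsilon\to0$: by (\ref{e209}) each coupling term carries a factor $\epsilon^{\Delta_{\ell_1\ell_2}-\lambda_1k_1\delta_{\ell_1}-\lambda_2k_2\delta_{\ell_2}}$ with strictly positive exponent, and by the uniform bounds (\ref{e286v2}) on the kernels $C_{\ell_1\ell_2}$ together with the local boundedness of $\omega$ near the fixed point, these terms tend to $0$. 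Hence $\tilde F(\tau_1^\ast,\tau_2^\ast,m,0)=0$. Running this over the open arc of $Z_m$ in the domain and invoking the identity principle on the analytic curve $Z_m$, one concludes $\tilde F(\cdot,\cdot,m,0)\equiv0$ on $Z_m$, which is to be excluded.

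The hard part will be exactly this last exclusion: showing that $\tilde F(\cdot,\cdot,m,0)$ does not vanish identically along $Z_m$. The difficulty is twofold, namely that the right-hand side of (\ref{epral2}) is self-referential through the convolution in $\omega$ (which is why the decoupling via the positive $\epsilon$-exponents of (\ref{e209}) is essential), and that $\tilde F=\mathcal{B}_{m_{k_2},T_2}\mathcal{B}_{m_{k_1},T_1}F$ must be traced through the double Borel transform back to the datum $\psi$ of (\ref{e231}), using the Borel--Laplace duality recalled in Section~\ref{seclap}. Once one checks that a nonzero $\psi(\cdot,m,0)$ yields a $\tilde F(\cdot,\cdot,m,0)$ not annihilated along $Z_m$, the contradiction is complete. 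This non-cancellation of the small divisor $P_m$ is precisely what obstructs holomorphy on a domain containing a full disc in either variable, and is what forces the relevant kernel to be meromorphic rather than holomorphic.
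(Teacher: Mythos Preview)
Your geometric core --- showing that for $|\tau_2|$ large in $S_2$ the roots of $\tau_1\mapsto P_m(\tau_1,\tau_2)$ are forced into $D(0,\rho)$, and symmetrically --- is exactly what the paper does, and is in fact the \emph{entire} content of the paper's proof. The paper writes the resonance relation $\tau_1^{k_1\delta_{D_1}}\tau_2^{k_2\delta_{D_2}}=c(m)$, bounds $|\tau_1|\le \rho_0/2$ once $|\tau_2|$ exceeds an explicit threshold (using the outer radius $r^2_{Q,R_{D_1D_2}}$ from (\ref{e203})), observes that therefore all $k_1\delta_{D_1}$ roots lie in $D(0,\rho_0)$, lets $\rho_0\to 0$, and stops. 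No contradiction argument is carried out: the proposition is treated as a structural observation on where the small divisors of $P_m$ sit, and the conclusion that $\omega$ cannot be holomorphic on such a domain is left implicit (dividing by $P_m$ would produce singularities accumulating at $\tau_1=0$).

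Your third and fourth paragraphs are therefore more ambitious than anything the paper attempts, and they run into genuine problems that the paper simply avoids by not raising them. First, letting $\epsilon\to 0$ presupposes a \emph{family} of holomorphic solutions $\omega(\cdot,\cdot,m,\epsilon)$, defined for all small $\epsilon$ and locally bounded uniformly in $\epsilon$ near the chosen point; the statement as phrased concerns a putative solution for a given $\epsilon$, so this limiting step would need either a reformulation of the claim or an additional argument. Second, the ``hard part'' you correctly flag --- that $\tilde F(\cdot,\cdot,m,0)$ does not vanish identically along $Z_m$ --- is nowhere addressed in the paper and is not routine: $F$ is itself built in (\ref{e231}) as a Laplace-type integral of $\psi$, and $\tilde F$ is its double $m_{k_j}$-Borel transform, so tracing non-vanishing along the analytic set $Z_m$ through this chain would require real work. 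In short, your first two paragraphs already reproduce the paper's argument in full; what follows is an attempt to make rigorous something the authors are content to leave heuristic, and the obstacles you identify are precisely why they do not pursue it.
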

\begin{proof}
Due to the equation (\ref{epral2}) exhibits a symmetric behavior with respect to $\tau_1$ and $\tau_2$, we only give details on the first of the previous statements, whilst the second follows from a symmetric argument. 

Fix $m\in\R$ and let $\rho_0>0$. We consider $\tau_2\in S_2$ such that 
$$|\tau_2|\ge\left(\frac{r^2_{Q,R_{D_1D_2}}}{\left(\rho_0/2\right)^{k_1\delta_{D_1}}}\frac{1}{k_1^{\delta_{D_1}}k_2^{\delta_{D_2}}}\right)^{\frac{1}{k_2\delta_{D_2}}}.$$
We derive that any $\tau_1\in\C$ such that $P_m(\tau_1,\tau_2)=0$ would satisfy that 
$$\tau_1^{k_1\delta_{D_1}}\tau_2^{k_2\delta_{D_2}}=\frac{Q(im)}{R_{D_1D_2}(im)}\frac{1}{k_1^{\delta_{D_1}}k_2^{\delta_{D_2}}},$$
which entails
$$|\tau_1|^{k_1\delta_{D_1}}\le\frac{1}{|\tau_2|^{k_2\delta_{D_2}}}r^2_{Q,R_{D_1D_2}}\frac{1}{k_1^{\delta_{D_1}}k_2^{\delta_{D_2}}}\le\left(\frac{\rho_0}{2}\right)^{k_1\delta_{D_1}}.$$
It follows that all the $k_1\delta_{D_1}$ roots of $\tau_1\mapsto P_m(\tau_1,\tau_2)$ for such choice of $\tau_2$ belong to the disc $D(0,\rho_0)$. The limit $\rho_0\to 0$ concludes the result. 
\end{proof}

As a matter of fact, a small divisor phenomena is observed, which does not allow a summability procedure. Moreover, the possible actual holomorphic solutions $\omega(\tau_1,\tau_2,m,\epsilon)$ of (\ref{epral2}) are only expected to be well defined and holomorphic on products of sectors with infinite radius, say $S_1\times S_2$. This construction does not allow to use the procedure applied neither in~\cite{family1} nor~\cite{family2}, in order to analyze the asymptotic properties of the solutions with respect to the small perturbation parameter $\epsilon$ (see the introduction of this work for further details).


\subsection{Second approach}\label{subsec2}

In view of the failure of the approaches in~\cite{family1,family2} (see Section~\ref{secfirstapproach}), we need to adopt another perspective. 


We search for solutions of (\ref{epral1}) of the special form
\begin{align}
U_{d}(T_1,T_2,m,\epsilon)&=\int_{L_d}\omega(u,m,\epsilon)\exp\left(-\left(\frac{u}{T_1}\right)^{k_1}-\left(\frac{u}{T_2}\right)^{k_2}\right)\frac{du}{u}\nonumber\\
&=\int_{L_d}\omega(u,m,\epsilon)\Omega(u,T_1,T_2)\frac{du}{u},\label{e285}
\end{align}
where $L_d$ stands for a half line departing from the origin and with bisecting direction of argument given by $d\in\R$, for some $d\in\R$ to be determined. If one pursues the construction of the first approach, one can construct genuine solutions of (\ref{epral1}) as a double Laplace
transform
$$ U_{d_{1},d_{2}}(T_{1},T_{2},m,\epsilon) = \frac{k_{1}k_{2}}{(2\pi)^{1/2}} \int_{L_{d_1}} \int_{L_{d_2}}
w(u_{1},u_{2},m,\epsilon) \exp( -(\frac{u_1}{T_1})^{k_1} ) \exp( -(\frac{u_2}{T_2})^{k_2} ) \frac{du_{1}}{u_1} \frac{du_{2}}{u_2}$$
along well chosen halflines $L_{d_j} \subset S_{j}$, $j=1,2$,
as it was the case in our previous studies~\cite{family1,family2}. Our idea consists on merging this double integral along the product
$L_{d_1} \times L_{d_2}$ into a simple integral along a halfline $L_{d} \subset \mathbb{C}$. Geometrically, it consists in a projection
on the diagonal part $(u,u) \in \mathbb{C}^{2}$, $u \in \mathbb{C}$ of the space $\mathbb{C}^{2}$. The advantage is that in the related
problem associated to the Borel map $w(u,m,\epsilon)$ (see (\ref{epral3})) involves now $P_{m}(\tau,\tau)$ as a denominator which is this time
well defined on a full neighborhood of 0 w.r.t $\tau$ and analytically continuable along unbounded sectors $S_{d}$ with suitable
directions $d \in \mathbb{R}$. As a result, the asymptotic analysis in $\epsilon$ becomes a tractable task. The drawback of this
approach concerns the class of equations we are able to handle which is reduced compared to our previous studies and contains linear
PDEs with special time reliance.

\vspace{0.3cm}

\noindent\textbf{Remark:}\label{aaa} Observe that in the case $k_1=k_2$, the function $U_d$ turns out to be a Laplace transform of order $k_1$ in the meromorphic function $$\Omega(T_1,T_2)=\frac{1}{\left(\frac{1}{T_1}\right)^{k_1}+\left(\frac{1}{T_2}\right)^{k_1}},$$
near $0\in\C^2$. That situation is directly linked to a summability procedure with respect to a germ of function in $\C^2$, as described in~\cite{mosch18}. 
 However, in our situation, the function $\Omega$ is meromorphic near 0, not analytic.

\vspace{0.3cm}

Under the hypothesis that the solution of (\ref{epral1}) is of the form (\ref{e285}), then $\omega$ in (\ref{e285}) solves the problem
\begin{multline}
\left(Q(im)-(k_1\tau^{k_1})^{\delta_{D_1}}(k_2\tau^{k_2})^{\delta_{D_2}}R_{D_1D_2}(im)\right)\omega(\tau,m,\epsilon)\\
=\sum_{\substack{1\le\ell_1\le D_1-1\\1\le\ell_2\le D_2-1}}\epsilon^{\Delta_{\ell_1\ell_2}-\lambda_1 k_1\delta_{\ell_1}-\lambda_2 k_2\delta_{\ell_2}}(k_1\tau^{k_1})^{\delta_{\ell_1}}(k_2\tau^{k_2})^{\delta_{\ell_2}}\\
\times \frac{1}{(2\pi)^{1/2}}\int_{-\infty}^{\infty}C_{\ell_1\ell_2}(m-m_1,\epsilon)R_{\ell_1\ell_2}(im_1)\omega(\tau,m_1,\epsilon)dm_1+\psi(\tau,m,\epsilon).\label{epral3}
\end{multline}

We substitute (\ref{epral2}) by (\ref{epral3}), as an auxiliary problem in order to solve the main equation. In the next section, we study some spaces of functions which are involved in the construction of the solution of (\ref{epral3}).

\subsection{Banach spaces of functions with exponential growth and decay}\label{subsec3}

The Banach spaces described in this section are modified versions of those appearing in~\cite{lama}. We omit the details which can be derived directly from that work, and the variations of that norm, stated in~\cite{family1,family2}.

We fix positive real numbers $\beta,\mu,\nu$, with $\mu>1$, and an integer $k'>0$. Let $\rho>0$ and $S_d$ be an open and unbounded sector with bisecting direction $d\in\R$. We denote $\overline{D}(0,\rho)$ the closed disc centered at 0 and positive radius $\rho$.

\begin{defin}
We write $\hbox{Exp}^d_{(\nu,\beta,\mu,k')}$ for the vector space of continuous complex value functions $(\tau,m)\mapsto h(\tau,m)$, defined on $(S_d\cup \overline{D}(0,\rho))\times \R$, holomorphic with respect to $\tau$ on $S_d\cup D(0,\rho)$ such that
$$\left\|h(\tau,m)\right\|_{(\nu,\beta,\mu,k')}:=\sup_{\tau\in S_d\cup\overline{D}(0,\rho),m\in\R}(1+|m|)^{\mu}e^{\beta|m|}\exp\left(-\nu|\tau|^{k'}\right)\frac{1}{|\tau|}|h(\tau,m)|<\infty.$$
The normed space $(\hbox{Exp}^d_{(\nu,\beta,\mu,k')},\left\|\cdot\right\|_{(\nu,\beta,\mu,k')})$ is a Banach space. 
\end{defin}

The next result is straighforward from the definition of the norm $\left\|\cdot\right\|_{(\nu,\beta,\mu,k')}$.

\begin{lemma}\label{lema1}
Let $(\tau,m)\mapsto a(\tau,m)$ be a bounded continuous function of $\overline{D}(0,\rho)\cup S_d)\times \R$, holomorphic with respect to $\tau$ on $D(0,\rho)\cup S_d$. Then, for every $h\in \hbox{Exp}^d_{(\nu,\beta,\mu,k')}$ it holds that $a(\tau,m)h(\tau,m)\in \hbox{Exp}^d_{(\nu,\beta,\mu,k')}$, and
$$\left\|a(\tau,m)h(\tau,m)\right\|_{(\nu,\beta,\mu,k')}\le \left(\sup_{(\tau,m)\in \overline{D}(0,\rho)\cup S_d)\times \R}|a(\tau,m)|\right)\left\|h(\tau,m)\right\|_{(\nu,\beta,\mu,k')}.$$
\end{lemma}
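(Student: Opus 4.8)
The plan is to verify directly the three defining requirements for membership in $\hbox{Exp}^d_{(\nu,\beta,\mu,k')}$ --- continuity on $(S_d\cup\overline{D}(0,\rho))\times\R$, holomorphy with respect to $\tau$ on $S_d\cup D(0,\rho)$, and finiteness of the weighted supremum norm --- and then to read off the claimed estimate from the same computation. The first two requirements are immediate from the hypotheses: since $a$ is continuous on $(S_d\cup\overline{D}(0,\rho))\times\R$ and $h$ is continuous there by assumption, their product $a\cdot h$ is continuous; likewise, since for each fixed $m$ both $a(\cdot,m)$ and $h(\cdot,m)$ are holomorphic in $\tau$ on $S_d\cup D(0,\rho)$, so is their product.

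For the norm, the key observation is that the weight appearing in $\left\|\cdot\right\|_{(\nu,\beta,\mu,k')}$ multiplies $|a|$ and $|h|$ in a completely factored way, so no interaction between the two functions needs to be estimated. First I would write, for each $(\tau,m)\in(S_d\cup\overline{D}(0,\rho))\times\R$,
$$(1+|m|)^{\mu}e^{\beta|m|}\exp\left(-\nu|\tau|^{k'}\right)\frac{1}{|\tau|}|a(\tau,m)h(\tau,m)| = |a(\tau,m)|\,\Big[(1+|m|)^{\mu}e^{\beta|m|}\exp\left(-\nu|\tau|^{k'}\right)\frac{1}{|\tau|}|h(\tau,m)|\Big].$$
Then, bounding the first factor by $M:=\sup_{(\tau,m)\in(S_d\cup\overline{D}(0,\rho))\times\R}|a(\tau,m)|$, which is finite by the assumption that $a$ is bounded, and the bracketed factor by $\left\|h\right\|_{(\nu,\beta,\mu,k')}$, yields the uniform pointwise bound $M\,\left\|h\right\|_{(\nu,\beta,\mu,k')}$ on the left-hand side.

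Finally I would take the supremum over all $(\tau,m)$ in the left-hand side. This shows at once that $\left\|a\,h\right\|_{(\nu,\beta,\mu,k')}$ is finite --- so that $a\cdot h\in\hbox{Exp}^d_{(\nu,\beta,\mu,k')}$ --- and that it is dominated by $M\,\left\|h\right\|_{(\nu,\beta,\mu,k')}$, which is exactly the asserted inequality. There is no genuine obstacle here: the statement is simply the submultiplicativity of the weighted sup-norm under multiplication by a bounded factor. The only point worth flagging is that it is the boundedness of $a$, not merely its continuity, that guarantees $M<\infty$ and hence the finiteness of the resulting norm; this is why the hypothesis is stated with "bounded continuous" rather than just "continuous".
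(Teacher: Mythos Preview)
Your proposal is correct and matches the paper's treatment: the paper does not give a detailed proof, stating only that the result is ``straightforward from the definition of the norm $\left\|\cdot\right\|_{(\nu,\beta,\mu,k')}$,'' which is precisely the direct verification you carry out.
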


The proof of the next result follows analogous arguments as those in Proposition 2~\cite{lama19}, and refer to that work for a complete proof.

\begin{prop}\label{prop426}
Let $R_1R_2\in\C[X]$ such that
$$\hbox{deg}(R_1)\ge \hbox{deg}(R_2),\quad R_1(im)\neq0,\quad \mu>\hbox{deg}(R_2)+1.$$
Given $f\in E_{(\beta,\mu)}$ and $g\in \hbox{Exp}^d_{(\nu,\beta,\mu,k')}$, then it holds that the function
$$\Phi(\tau,m):=\frac{1}{R_1(im)}\int_{-\infty}^{\infty}f(m-m_1)R_2(im_1)g(\tau,m_1)dm_1,$$
is an element of $\hbox{Exp}^d_{(\nu,\beta,\mu,k')}$, and there exists $C_1>0$ such that 
$$\left\|\Phi(\tau,m)\right\|_{(\nu,\beta,\mu,k')}\le C_1\left\|f(m)\right\|_{(\beta,\mu)}\left\|g(\tau,m)\right\|_{(\nu,\beta,\mu,k')}.$$
\end{prop}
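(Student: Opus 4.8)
The plan is to prove Proposition~\ref{prop426} by directly estimating the weighted supremum that defines the norm $\left\|\cdot\right\|_{(\nu,\beta,\mu,k')}$ applied to $\Phi$. First I would fix $(\tau,m)\in(S_d\cup\overline{D}(0,\rho))\times\R$ and write out the quantity
$$(1+|m|)^{\mu}e^{\beta|m|}\exp\left(-\nu|\tau|^{k'}\right)\frac{1}{|\tau|}\left|\frac{1}{R_1(im)}\int_{-\infty}^{\infty}f(m-m_1)R_2(im_1)g(\tau,m_1)\,dm_1\right|.$$
The key observation is that the $\tau$-dependence factors cleanly: pulling $\exp(-\nu|\tau|^{k'})/|\tau|$ inside the integral and inserting $1=\exp(-\nu|\tau|^{k'})\frac{1}{|\tau|}|g(\tau,m_1)|\cdot\frac{\exp(\nu|\tau|^{k'})|\tau|}{|g(\tau,m_1)|}$ lets me bound the $g$-factor uniformly by $\left\|g\right\|_{(\nu,\beta,\mu,k')}(1+|m_1|)^{-\mu}e^{-\beta|m_1|}$. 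After this step all $\tau$ dependence has been absorbed and the problem reduces to a purely $m$-variable convolution estimate, which is the standard mechanism behind these Banach-space norms.

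Next I would carry out the Fourier-side estimate. Using $|f(m-m_1)|\le\left\|f\right\|_{(\beta,\mu)}(1+|m-m_1|)^{-\mu}e^{-\beta|m-m_1|}$ together with the bound on $g$ and the polynomial control $|R_2(im_1)|\le C(1+|m_1|)^{\deg(R_2)}$, the whole expression is dominated by
$$\frac{\left\|f\right\|_{(\beta,\mu)}\left\|g\right\|_{(\nu,\beta,\mu,k')}}{|R_1(im)|}(1+|m|)^{\mu}e^{\beta|m|}\int_{-\infty}^{\infty}\frac{e^{-\beta|m-m_1|}e^{-\beta|m_1|}(1+|m_1|)^{\deg(R_2)}}{(1+|m-m_1|)^{\mu}(1+|m_1|)^{\mu}}\,dm_1.$$
The triangle inequality $e^{\beta|m|}e^{-\beta|m-m_1|}e^{-\beta|m_1|}\le 1$ kills the exponentials, and since $R_1(im)\neq0$ and $\deg(R_1)\ge\deg(R_2)$ the rational prefactor $(1+|m|)^{\mu}/|R_1(im)|$ stays bounded after compensating the $(1+|m_1|)^{\deg(R_2)}$ growth. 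What remains is to show that
$$\sup_{m\in\R}(1+|m|)^{\mu-\deg(R_2)}\int_{-\infty}^{\infty}\frac{dm_1}{(1+|m-m_1|)^{\mu}(1+|m_1|)^{\mu-\deg(R_2)}}<\infty,$$
which is exactly the classical convolution lemma used throughout~\cite{lama}, valid precisely because $\mu>\deg(R_2)+1$. Splitting the integral according to whether $|m_1|\le|m|/2$ or $|m_1|>|m|/2$ and using $(1+|m-m_1|)\ge(1+|m|/2)$ on the first region and $(1+|m_1|)\ge(1+|m|/2)$ on the second gives the uniform bound and produces the constant $C_1$.

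I expect the main obstacle to be purely bookkeeping rather than conceptual: keeping track of which polynomial factor is compensated by $1/R_1(im)$ and which by the decay of the integrable kernel, so that the hypothesis $\mu>\deg(R_2)+1$ is used in exactly the right place to guarantee $\mu-\deg(R_2)>1$ and hence convergence of the remaining integral. Since the excerpt explicitly states that the argument follows Proposition~2 of~\cite{lama19}, I would not reprove the convolution lemma in full but rather cite it and verify that the hypotheses transfer, emphasizing that the $\tau$-variable plays no role beyond being factored out through the weight $\exp(-\nu|\tau|^{k'})/|\tau|$ in the very first step. The holomorphy of $\Phi$ in $\tau$ on $S_d\cup D(0,\rho)$ and its continuity follow from the corresponding properties of $g$ together with uniform convergence of the $m_1$-integral, which the same bounds justify.
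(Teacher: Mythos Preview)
Your proposal is correct and follows exactly the route the paper intends: the paper does not give its own proof but simply points to Proposition~2 of~\cite{lama19}, and what you have sketched is precisely that standard argument --- factor out the $\tau$-weight to reduce to a pure convolution estimate in $m$, kill the exponentials via the triangle inequality, use $|R_1(im)|\ge c(1+|m|)^{\deg R_1}\ge c(1+|m|)^{\deg R_2}$ to absorb the polynomial growth, and finish with the splitting $|m_1|\lessgtr|m|/2$ which requires $\mu-\deg(R_2)>1$. Your only mildly confusing sentence is the one about ``compensating the $(1+|m_1|)^{\deg(R_2)}$ growth'' with a prefactor in $m$; the displayed integral that follows makes your intended bookkeeping clear, so this is purely a matter of wording rather than a gap.
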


\subsection{Solution of an auxiliary equation}\label{subsec4}

At this point, we provide a brief summary on the strategy to trace. We continue with the approach described in Section~\ref{subsec2}, searching for solutions of (\ref{epral1}) in the form (\ref{e285}). In this section, we guarantee the existence of $\omega(\tau,m,\epsilon)$ by means of a fixed point argument in the Banach space of functions introduced in Section~\ref{subsec3}. 

At this point, we follow a similar guideline as the one initiated in our former study~\cite{lama}. We consider the next polynomial
\begin{equation}
P_{m}(\tau) = P_{m}(\tau,\tau) = Q(im) - k_{1}^{\delta_{D_1}}k_{2}^{\delta_{D_2}} \tau^{k_{1}\delta_{D_1} + k_{2}\delta_{D_2}}R_{D_{1}D_{2}}(im) \label{defin_Pmtau}
\end{equation}
In the following, we need lower bounds of the expression $P_{m}(\tau)$ with respect to $m$ and $\tau$.
In order to achieve this goal, we can factorize the polynomial w.r.t $\tau$, namely
\begin{equation}
P_{m}(\tau) = -k_{1}^{\delta_{D_1}}k_{2}^{\delta_{D_2}}R_{D_{1}D_{2}}(im)
\prod_{l=0}^{k_{1}\delta_{D_1}+k_{2}\delta_{D_2}-1} (\tau - q_{l}(m)) \label{factor_Pm}
\end{equation}
where its roots $q_{l}(m)$ can be displayed explicitely as
\begin{multline*}
q_{l}(m) =
\left( \frac{|Q(im)|}{|R_{D_{1}D_{2}}(im)|k_{1}^{\delta_{D_1}}k_{2}^{\delta_{D_2}}}
\right)^{\frac{1}{k_{1}\delta_{D_1} + k_{2}\delta_{D_2}}}\\
\times \exp
\left( \frac{\sqrt{-1}}{k_{1}\delta_{D_1}+k_{2}\delta_{D_2}}
( \mathrm{arg}( \frac{Q(im)}{R_{D_{1}D_{2}}(im)k_{1}^{\delta_{D_1}}k_{2}^{\delta_{D_2}}})
+ 2\pi l ) \right)
\end{multline*}
for all $0 \leq l \leq k_{1}\delta_{D_1}+k_{2}\delta_{D_2}-1$, for all $m \in \mathbb{R}$.

We set an unbounded sector $S_{d}$ centered at 0, a small disc $D(0,\rho)$ and we adjust the sector $A_{Q,R_{D_{1}D_{2}}}$ in a way that the next
condition hold : a constant $\mathfrak{m}>0$ can be chosen with
\begin{equation}
|\tau - q_{l}(m)| \geq \mathfrak{m}(1 + |\tau|) \label{dist_u_qlm}
\end{equation}
for all $0 \leq l \leq k_{1}\delta_{D_1} + k_{2}\delta_{D_2}-1$, all $m \in \mathbb{R}$, provided that $\tau \in S_{d} \cup D(0,\rho)$. Indeed, the inclusion
(\ref{e203}) implies in particular that all the roots $q_{l}(m)$, $0 \leq l \leq k_{1}\delta_{D_1}+k_{2}\delta_{D_2}-1$ remain apart of some
neighborhood of the origin, i.e satisfy $|q_{l}(m)| \geq 2\rho$ for an appropriate choice of $\rho>0$. Furthermore,
when the aperture of $A_{Q,R_{D_{1}D_{2}}}$ is taken close enough to 0, all these roots $q_{l}(m)$ stay inside a union
$\mathcal{U}$ of unbounded sectors centered at 0 that do not cover a full neighborhood of 0 in $\mathbb{C}^{\ast}$. We assign a sector $S_{d}$ with
$$ S_{d} \cap \mathcal{U} = \emptyset $$
By construction, the quotients $q_{l}(m)/\tau$ fall apart some small disc centered at 1 in $\mathbb{C}$ for all
$\tau \in S_{d}$, $m \in \mathbb{R}$, $0 \leq l \leq k_{1}\delta_{D_1} + k_{2}\delta_{D_2}-1$. Then, (\ref{dist_u_qlm}) follows.

We are now ready to supply lower bounds for $P_{m}(\tau)$.
\begin{lemma}
A constant $C_{P}>0$ (depending on $k_{1},k_{2},\delta_{D_1},\delta_{D_2},\mathfrak{m}$) can be found with
\begin{equation}
|P_{m}(\tau)| \geq C_{P}|R_{D_{1}D_{2}}(im)|(1 + |\tau|)^{k_{1}\delta_{D_1}+k_{2}\delta_{D_2}} \label{low_bds_Pmu}
\end{equation}
for all $\tau \in S_{d} \cup D(0,\rho)$, all $m \in \mathbb{R}$.
\end{lemma}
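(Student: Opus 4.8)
The plan is to read off the estimate directly from the factorized form~(\ref{factor_Pm}) of $P_m(\tau)$, using the uniform separation bound~(\ref{dist_u_qlm}) that has just been secured by the geometric discussion preceding the statement. All the genuine work has in fact already been carried out in arranging the sector $S_d$ and the disc $D(0,\rho)$ so that each root $q_l(m)$ stays at relative distance at least $\mathfrak{m}$ from $\tau$; what remains is a short multiplicative estimate, so I do not expect a real obstacle here.

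First I would take moduli in~(\ref{factor_Pm}), writing, for every $\tau \in S_d \cup D(0,\rho)$ and every $m \in \R$,
$$
|P_m(\tau)| = k_1^{\delta_{D_1}} k_2^{\delta_{D_2}} |R_{D_1D_2}(im)| \prod_{l=0}^{k_1\delta_{D_1}+k_2\delta_{D_2}-1} |\tau - q_l(m)|.
$$
Then I would insert the lower bound $|\tau - q_l(m)| \ge \mathfrak{m}(1+|\tau|)$ from~(\ref{dist_u_qlm}), which holds uniformly in $l$ and $m$ on the prescribed domain, into each of the $k_1\delta_{D_1}+k_2\delta_{D_2}$ factors of the product. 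This immediately yields
$$
|P_m(\tau)| \ge k_1^{\delta_{D_1}} k_2^{\delta_{D_2}} \mathfrak{m}^{\,k_1\delta_{D_1}+k_2\delta_{D_2}} |R_{D_1D_2}(im)| (1+|\tau|)^{k_1\delta_{D_1}+k_2\delta_{D_2}},
$$
which is exactly the claimed inequality~(\ref{low_bds_Pmu}).

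Accordingly, the constant is simply $C_P := k_1^{\delta_{D_1}} k_2^{\delta_{D_2}} \mathfrak{m}^{\,k_1\delta_{D_1}+k_2\delta_{D_2}}$, manifestly depending only on $k_1,k_2,\delta_{D_1},\delta_{D_2}$ and $\mathfrak{m}$ as asserted, and positive since $\mathfrak{m}>0$. The only point worth double-checking is that~(\ref{dist_u_qlm}) is indeed valid on the full domain $S_d \cup D(0,\rho)$ and uniformly over all roots and all $m$; once that is granted, the estimate is nothing more than raising the single-factor bound to the power equal to the number of roots. In other words, the substance of the proof is the root-separation argument established just above the statement, and the present lemma is its direct consequence.
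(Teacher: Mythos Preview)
Your proof is correct and follows exactly the same route as the paper: take moduli in the factorization~(\ref{factor_Pm}), apply the uniform root-separation bound~(\ref{dist_u_qlm}) to each of the $k_1\delta_{D_1}+k_2\delta_{D_2}$ factors, and read off $C_P = k_1^{\delta_{D_1}} k_2^{\delta_{D_2}} \mathfrak{m}^{\,k_1\delta_{D_1}+k_2\delta_{D_2}}$. There is nothing to add.
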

\begin{proof}
Departing from the factorization (\ref{factor_Pm}), the lower bounds (\ref{dist_u_qlm}) entail
$$ |P_{m}(\tau)| \geq k_{1}^{\delta_{D_1}}k_{2}^{\delta_{D_2}} \mathfrak{m}^{k_{1}\delta_{D_1}+k_{2}\delta_{D_2}}
|R_{D_{1}D_{2}}(im)| (1 + |\tau|)^{k_{1}\delta_{D_1}+k_{2}\delta_{D_2}} $$
for all $\tau \in S_{d} \cup D(0,\rho)$.
\end{proof}

\begin{lemma}\label{lema2}
Assume the conditions (\ref{e193})-(\ref{e220}) hold on the elements involved in the problem (\ref{epral}), with forcing term $f$ determined by the construction and conditions in (\ref{e319})-(\ref{e233}). 
We single out a sector $S_d$ that fulfills the constraints from the construction above.

Then, there exist $\epsilon_0,\varpi>0$ and $\xi_{\psi}>0$ (depending on $k_1,k_2,\delta_{D_1},\delta_{D_2}, Q, R_{D_1D_2}$), such that if $C_{\psi}\le\xi_{\psi}$, then for every $\epsilon\in D(0,\epsilon_0)$ the map $\mathcal{H}_{\epsilon}$ defined by
\begin{multline}
\mathcal{H}_{\epsilon}(\omega(\tau,m)):=\frac{1}{P_m(\tau)}\left(\sum_{\substack{1\le\ell_1\le D_1-1\\1\le\ell_2\le D_2-1}}\epsilon^{\Delta_{\ell_1\ell_2}-\lambda_1 k_1\delta_{\ell_1}-\lambda_2 k_2\delta_{\ell_2}}(k_1\tau^{k_1})^{\delta_{\ell_1}}(k_2\tau^{k_2})^{\delta_{\ell_2}}\right.\\
\left.\times\frac{1}{(2\pi)^{1/2}}\int_{-\infty}^{\infty}C_{\ell_1\ell_2}(m-m_1,\epsilon)R_{\ell_1\ell_2}(im_1)\omega(\tau,m_1)dm_1\right)+\frac{1}{P_m(\tau)}\psi(\tau,m,\epsilon)\label{e346}
\end{multline}
is such that:
\begin{itemize}
\item[i)] $\mathcal{H}_\epsilon(\overline{B}(0,\varpi))\subseteq B(0,\varpi)$, where $\overline{B}(0,\varpi)$ is the closed disc of radius $\varpi>0$, in $\hbox{Exp}^d_{(\nu,\beta,\mu,k')}$.
\item[ii)] We have
$$\left\|\mathcal{H}_\epsilon(\omega_1)-\mathcal{H}_\epsilon(\omega_2)\right\|_{(\nu,\beta,\mu,k')}\le\frac{1}{2}\left\|\omega_1-\omega_2\right\|_{(\nu,\beta,\mu,k')},$$
for all $\omega_1,\omega_2\in\overline{B}(0,\varpi)\subseteq \hbox{Exp}^d_{(\nu,\beta,\mu,k')}$.
\end{itemize}
\end{lemma}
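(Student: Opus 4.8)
The plan is to establish that $\mathcal{H}_\epsilon$ is a contractive self-map on a closed ball in the Banach space $\hbox{Exp}^d_{(\nu,\beta,\mu,k')}$, so that the existence of a unique fixed point $\omega$ follows from the Banach fixed point theorem. The two items i) and ii) are exactly the two hypotheses of that theorem, and my strategy is to reduce both of them to the two estimates already available in the excerpt: the lower bound on $P_m(\tau)$ from the preceding Lemma (inequality (\ref{low_bds_Pmu})), and the convolution estimate in $m$ provided by Proposition~\ref{prop426}. The whole proof is thus a careful bookkeeping of norms, with the crucial point being that the prefactor $1/P_m(\tau)$ gains enough decay in $\tau$ to absorb the monomials $(k_1\tau^{k_1})^{\delta_{\ell_1}}(k_2\tau^{k_2})^{\delta_{\ell_2}}$, and that the powers of $\epsilon$ can be made small.

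First I would treat the linear integral part. Fix $\omega\in\overline{B}(0,\varpi)$ and estimate each summand. Writing $C_{\ell_1\ell_2}(\cdot,\epsilon)\in E_{(\beta,\mu)}$ and invoking Proposition~\ref{prop426} with $R_1=R_{D_1D_2}$ and $R_2=R_{\ell_1\ell_2}$ (which is legitimate by (\ref{e220}) and the condition $\mu>1+\hbox{deg}(R_{\ell_1\ell_2})$), the convolution integral against $\omega(\tau,m_1)$ lands back in $\hbox{Exp}^d_{(\nu,\beta,\mu,k')}$ with norm bounded by $C_1\mathcal{C}_{\ell_1\ell_2}\|\omega\|_{(\nu,\beta,\mu,k')}$ after dividing by $R_{D_1D_2}(im)$; note one should arrange the division by $R_{D_1D_2}(im)$ coming from (\ref{low_bds_Pmu}) so that it pairs with the $1/R_1(im)$ appearing in the Proposition. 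The remaining factor to control inside the norm is the multiplier
\[
a_{\ell_1\ell_2}(\tau):=\epsilon^{\Delta_{\ell_1\ell_2}-\lambda_1k_1\delta_{\ell_1}-\lambda_2k_2\delta_{\ell_2}}\,\frac{(k_1\tau^{k_1})^{\delta_{\ell_1}}(k_2\tau^{k_2})^{\delta_{\ell_2}}}{(1+|\tau|)^{k_1\delta_{D_1}+k_2\delta_{D_2}}},
\]
which is a bounded holomorphic function on $S_d\cup D(0,\rho)$ precisely because the second inequality in (\ref{e209}), $k_1\delta_{D_1}+k_2\delta_{D_2}\ge k_1\delta_{\ell_1}+k_2\delta_{\ell_2}$, guarantees the numerator does not outgrow the denominator in $|\tau|$. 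Then Lemma~\ref{lema1} lets me pull this bounded multiplier out of the norm. The exponent $\Delta_{\ell_1\ell_2}-\lambda_1k_1\delta_{\ell_1}-\lambda_2k_2\delta_{\ell_2}$ is \emph{strictly positive} by the first inequality in (\ref{e209}), so each $\epsilon$-power contributes a factor that tends to $0$ with $\epsilon_0$; collecting the constants yields a bound of the form $C_2\,\epsilon_0^{\,\sigma}\,\|\omega\|_{(\nu,\beta,\mu,k')}$ for the whole linear sum, with $\sigma>0$.

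Next I would handle the forcing term $\psi(\tau,m,\epsilon)/P_m(\tau)$: by the growth bound (\ref{e319}) and the lower bound (\ref{low_bds_Pmu}), and using that $k_1<k'<k_2$ together with the gain of $(1+|\tau|)^{k_1\delta_{D_1}+k_2\delta_{D_2}}$ in the denominator, this term lies in $\hbox{Exp}^d_{(\nu,\beta,\mu,k')}$ with norm bounded by $C_3\,C_\psi$. Combining, I obtain $\|\mathcal{H}_\epsilon(\omega)\|\le C_2\epsilon_0^{\sigma}\varpi + C_3C_\psi$ for $\omega\in\overline{B}(0,\varpi)$; choosing $\epsilon_0$ small so that $C_2\epsilon_0^{\sigma}\le\tfrac12$ and then $\xi_\psi$ small so that $C_3\xi_\psi\le\tfrac{\varpi}{2}$ forces the right-hand side to be $\le\varpi$, which proves i). For ii), the forcing term cancels in the difference $\mathcal{H}_\epsilon(\omega_1)-\mathcal{H}_\epsilon(\omega_2)$, leaving only the linear part applied to $\omega_1-\omega_2$; by linearity in $\omega$ the very same chain of estimates gives $\|\mathcal{H}_\epsilon(\omega_1)-\mathcal{H}_\epsilon(\omega_2)\|\le C_2\epsilon_0^{\sigma}\|\omega_1-\omega_2\|$, and the same smallness condition on $\epsilon_0$ delivers the contraction constant $\tfrac12$. \textbf{The main obstacle} I anticipate is verifying boundedness of the multiplier $a_{\ell_1\ell_2}(\tau)$ uniformly on the unbounded sector $S_d\cup D(0,\rho)$ and checking that it is truly holomorphic there — this is where the degree condition in (\ref{e209}) is indispensable and where one must be careful that the branch of $\tau^{k_i}$ and the placement of $S_d$ away from the root cluster $\mathcal{U}$ keep the quotient well defined; the rest is routine norm bookkeeping.
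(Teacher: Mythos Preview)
Your proposal is correct and follows essentially the same approach as the paper: both proofs combine the lower bound (\ref{low_bds_Pmu}) on $P_m(\tau)$ with Lemma~\ref{lema1} to extract the bounded multiplier $|\tau|^{k_1\delta_{\ell_1}+k_2\delta_{\ell_2}}/(1+|\tau|)^{k_1\delta_{D_1}+k_2\delta_{D_2}}$ (finite thanks to the second inequality in (\ref{e209})), apply Proposition~\ref{prop426} with $R_1=R_{D_1D_2}$, $R_2=R_{\ell_1\ell_2}$ to the convolution, and use the strict positivity of $\Delta_{\ell_1\ell_2}-\lambda_1k_1\delta_{\ell_1}-\lambda_2k_2\delta_{\ell_2}$ to shrink the linear part via $\epsilon_0$, while the forcing term is controlled by $C_\psi\le\xi_\psi$. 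Your only superfluous worry is the ``branch of $\tau^{k_i}$'': since $k_1,k_2$ are positive integers these are polynomials, so no branch issue arises.
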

\begin{proof}
Take $\omega\in \hbox{Exp}^d_{(\nu,\beta,\mu,k')}$, and $\epsilon\in D(0,\epsilon_0)$. Let $1\le \ell_1\le D_1-1$ and $1\le \ell_2\le D_2-1$. Bearing in mind Lemma~\ref{lema1}, Proposition~\ref{prop426}, from (\ref{e220}), (\ref{e286v2}) and (\ref{low_bds_Pmu}), we have
\begin{multline}
\left\|\epsilon^{\Delta_{\ell_1\ell_2}-\lambda_1 k_1\delta_{\ell_1}-\lambda_2 k_2\delta_{\ell_2}}(k_1\tau^{k_1})^{\delta_{\ell_1}}(k_2\tau^{k_2})^{\delta_{\ell_2}}\right.\\
\hfill\left.\times\frac{1}{(2\pi)^{1/2}}\int_{-\infty}^{\infty}\frac{C_{\ell_1\ell_2}(m-m_1,\epsilon)R_{\ell_1\ell_2}(im_1)}{P_m(\tau)}\omega(\tau,m_1)dm_1\right\|_{(\nu,\beta,\mu,k')}\\
\le C(\epsilon_0)\sup_{\tau\in S_d\cup\overline{D}(0,\rho),m\in\R}\frac{|\tau|^{k_1\delta_{\ell_1}+k_2\delta_{\ell_2}}}{(1+|\tau|)^{k_1\delta_{D_1}+k_2\delta_{D_2}}}\hfill\\
\hfill\times \left\|\frac{1}{R_{D_1D_2}(im)}\int_{-\infty}^{\infty} C_{\ell_1\ell_2}(m-m_1,\epsilon)R_{\ell_1\ell_2}(im)\omega(\tau,m_1)dm_1\right\|_{(\nu,\beta,\mu,k')}\\
\le C(\epsilon_0)C_1C_2\mathcal{C}_{\ell_1\ell_2}\left\|\omega(\tau,\epsilon)\right\|_{(\nu,\beta,\mu,k')}\le C(\epsilon_0)C_1C_2\mathcal{C}_{\ell_1\ell_2}\varpi,\label{e376}
\end{multline}
where 
$$C(\epsilon_0)=\sup_{\substack{1\le\ell_1\le D_1-1\\1\le\ell_2\le D_2-1}}\frac{\epsilon_0^{\Delta_{\ell_1\ell_2}-\lambda_1 k_1\delta_{\ell_1}-\lambda_2 k_2\delta_{\ell_2}}k_1^{\delta_{\ell_1}-\delta_{D_1}}k_2^{\delta_{\ell_2}-\delta_{D_2}}}{(2\pi)^{1/2}C_P\mathfrak{m}^{k_1\delta_{D_1}+k_2\delta_{D_2}}}>0,$$
is such that $C(\epsilon_0)\to0$ for $\epsilon_0\to0$, and 
$$C_2=\sup_{\tau\in S_d\cup\overline{D}(0,\rho),m\in\R}\frac{|\tau|^{k_1\delta_{\ell_1}+k_2\delta_{\ell_2}}}{(1+|\tau|)^{k_1\delta_{D_1}+k_2\delta_{D_2}}}.$$

On the other hand, in view of (\ref{e220}), (\ref{e319}) and (\ref{low_bds_Pmu}), we derive that
\begin{multline}
$$\left\|\frac{1}{P_m(\tau)}\psi(\tau,m,\epsilon)\right\|_{(\nu,\beta,\mu,k')}\le \sup_{\substack{\tau\in S_d\cup \overline{D}(0,\rho)\\m\in\R}}\frac{C_\psi}{C_p|R_{D_1D_2}(im)|(1+|\tau|)^{k_1\delta_{D_1}+k_2\delta_{D_2}}}\le C_3\xi_{\psi},$$
\end{multline}
for some $C_3>0$.


Let $\varpi,\xi_\psi,\epsilon_0>0$ such that $C(\epsilon_0)C_1C_2(\sum_{\substack{1\le\ell_1\le D_1-1\\1\le\ell_2\le D_2-1}}\mathcal{C}_{\ell_1\ell_2})\varpi+\xi_\psi C_3\le\varpi$. Under this choice, we get that $\mathcal{H}_\epsilon$ is such that $\mathcal{H}_\epsilon(\overline{B}(0,\varpi))\subseteq\overline{B}(0,\varpi)$. For the second part of the proof, we choose $\omega_1,\omega_2\in \hbox{Exp}^d_{(\nu,\beta,\mu,k')}$, with $\left\|\omega_j\right\|_{(\nu,\beta,\mu,k')}\le\varpi$. Analogous estimates as in (\ref{e376}) yield
\begin{multline}\left\|\epsilon^{\Delta_{\ell_1\ell_2}-\lambda_1 k_1\delta_{\ell_1}-\lambda_2 k_2\delta_{\ell_2}}(k_1\tau^{k_1})^{\delta_{\ell_1}}(k_2\tau^{k_2})^{\delta_{\ell_2}}\frac{R_{\ell_1\ell_2}(im)}{P_m(\tau)}(\omega_1(\tau,m)-\omega_2(\tau,m))\right\|_{(\nu,\beta,\mu,k')}\\
\le C(\epsilon_0)C_1C_2\mathcal{C}_{\ell_1\ell_2}\left\|\omega_1(\tau,m)-\omega_2(\tau,m)\right\|_{(\nu,\beta,\mu,k')},\label{e376b}
\end{multline}
and consequently 
$$ \left\|\mathcal{H}_\epsilon(\omega_1)-\mathcal{H}_\epsilon(\omega_2)\right\|_{(\nu,\beta,\mu,k')}\le C(\epsilon_0)C_1C_2\left(\sum_{\substack{1\le\ell_1\le D_1-1\\1\le\ell_2\le D_2-1}}\mathcal{C}_{\ell_1\ell_2}\right)\left\|\omega_1(\tau,m)-\omega_2(\tau,m)\right\|_{(\nu,\beta,\mu,k')}.$$
Let $\epsilon_0>0$ be such that $C(\epsilon_0)C_1C_2\left(\sum_{1\le\ell_1\le D_1-1,1\le\ell_2\le D_2-1}\mathcal{C}_{\ell_1\ell_2}\right)\le1/2$. This entails that $\mathcal{H}_\epsilon$ is a contractive map in $\overline{B}(0,\varpi)\subseteq \hbox{Exp}^d_{(\nu,\beta,\mu,k')}$. 
\end{proof}

As a consequence of Lemma~\ref{lema2}, we achieve the next result.

\begin{prop}\label{prop1}
Assume the hypotheses of Lemma~\ref{lema2} hold. Let $\varpi>0$. Then, there exist $\epsilon_0>0$ and $\xi_\psi>0$, such that if $C_\psi<\xi_\psi$, then for every $\epsilon\in D(0,\epsilon_0)$, the equation (\ref{epral3}) admits a solution $\omega(\tau,m,\epsilon)\in \hbox{Exp}^d_{(\nu,\beta,\mu,k')}$, with $\left\|\omega(\tau,m,\epsilon)\right\|_{(\nu,\beta,\mu,k')}\le\varpi$.
\end{prop}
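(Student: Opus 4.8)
The plan is to recognize that $\omega$ solves the auxiliary equation (\ref{epral3}) precisely when it is a fixed point of the operator $\mathcal{H}_\epsilon$ introduced in (\ref{e346}), and then to invoke the contraction mapping principle in the complete metric space furnished by a closed ball of $\hbox{Exp}^d_{(\nu,\beta,\mu,k')}$. First I would note that the lower bound (\ref{low_bds_Pmu}) guarantees $P_m(\tau)\neq 0$ for every $\tau\in S_d\cup\overline{D}(0,\rho)$ and every $m\in\R$, so that dividing (\ref{epral3}) through by $P_m(\tau)$ is both legitimate and reversible; the resulting identity is exactly $\omega=\mathcal{H}_\epsilon(\omega)$. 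Hence producing a solution in $\overline{B}(0,\varpi)$ with the prescribed norm bound is equivalent to producing a fixed point of $\mathcal{H}_\epsilon$ inside that ball.

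Second, since here $\varpi>0$ is prescribed in advance (in contrast to the free choice available in Lemma~\ref{lema2}), I would re-run the parameter selection to accommodate it. Using the estimates established in the proof of Lemma~\ref{lema2}, I would first fix $\epsilon_0>0$ small enough that $C(\epsilon_0)C_1C_2(\sum_{\ell_1,\ell_2}\mathcal{C}_{\ell_1\ell_2})\le 1/2$, which is possible because $C(\epsilon_0)\to 0$ as $\epsilon_0\to 0$; this single smallness requirement simultaneously controls the Lipschitz constant of the part of $\mathcal{H}_\epsilon$ that is linear in $\omega$. I would then choose $\xi_\psi>0$ small enough that $\xi_\psi C_3\le\varpi/2$, where $C_3$ is the constant bounding the forcing contribution $\|P_m(\tau)^{-1}\psi(\tau,m,\epsilon)\|_{(\nu,\beta,\mu,k')}$. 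With these choices the self-mapping condition $C(\epsilon_0)C_1C_2(\sum_{\ell_1,\ell_2}\mathcal{C}_{\ell_1\ell_2})\varpi+\xi_\psi C_3\le\varpi$ holds, since its first summand is then at most $\varpi/2$.

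Third, Lemma~\ref{lema2} applies verbatim under these choices: part (i) yields $\mathcal{H}_\epsilon(\overline{B}(0,\varpi))\subseteq\overline{B}(0,\varpi)$ and part (ii) yields the contraction property with constant $1/2$. Because $(\hbox{Exp}^d_{(\nu,\beta,\mu,k')},\|\cdot\|_{(\nu,\beta,\mu,k')})$ is a Banach space, the closed ball $\overline{B}(0,\varpi)$ is a complete metric space for the induced distance, so the contraction mapping principle produces a unique fixed point $\omega(\tau,m,\epsilon)\in\overline{B}(0,\varpi)$ for each $\epsilon\in D(0,\epsilon_0)$. Undoing the division by $P_m(\tau)$ shows this $\omega$ solves (\ref{epral3}), and by construction $\|\omega(\tau,m,\epsilon)\|_{(\nu,\beta,\mu,k')}\le\varpi$. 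I do not expect a genuine obstacle here, as Lemma~\ref{lema2} already carries all the analytic content (the lower bounds for $P_m$, the continuity of the convolution operator from Proposition~\ref{prop426}, and the smallness absorbed into $C(\epsilon_0)$ and $\xi_\psi$); the only point requiring care is the bookkeeping forced by the reversed quantifier order on $\varpi$, namely splitting the target radius as $\varpi/2+\varpi/2$ between the linear and the forcing contributions.
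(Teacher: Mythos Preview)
Your proposal is correct and follows the same approach as the paper: apply the Banach fixed point theorem to $\mathcal{H}_\epsilon$ on the closed ball $\overline{B}(0,\varpi)$, using the self-mapping and contraction properties supplied by Lemma~\ref{lema2}, and then observe that a fixed point of $\mathcal{H}_\epsilon$ is exactly a solution of (\ref{epral3}). Your treatment is in fact more careful than the paper's on one point: you explicitly address the reversed quantifier order (here $\varpi$ is prescribed rather than produced), splitting the target radius as $\varpi/2+\varpi/2$ to accommodate the given $\varpi$ by shrinking $\epsilon_0$ and $\xi_\psi$; the paper simply invokes Lemma~\ref{lema2} and leaves this bookkeeping implicit. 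The paper's proof also records in passing that the resulting fixed point depends holomorphically on $\epsilon\in D(0,\epsilon_0)$, a fact not contained in the statement but used later.
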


\begin{proof}
By the classical fixed point theorem in Banach spaces, Lemma~\ref{lema2} guarantees the existence of a function $\omega(\tau,m,\epsilon)\in B(0,\varpi)\subseteq \hbox{Exp}^d_{(\nu,\beta,\mu,k')}$ such that $\mathcal{H}_{\epsilon}(\omega(\tau,m,\epsilon))=\omega(\tau,m,\epsilon)$, for every $\epsilon\in D(0,\epsilon_0)$. Moreover, the function $\omega(\tau,m,\epsilon)$ depends holomorphically on $\epsilon\in D(0,\epsilon_0)$. One can directly check that this fixed point is a solution of the problem (\ref{epral3}).
\end{proof}

\section{Actual solutions of the auxiliar problem (\ref{epral3})}

Let $\mathcal{E}$ be a finite sector of $\C^{\star}$ with vertex at the origin. Let $d\in\R$ the bisecting direction of an infinite sector $S_d$ satisfying the hypotheses of Proposition~\ref{prop1}. Let $\omega_{d}$ be the solution of (\ref{epral3}) constructed in Proposition~\ref{prop1}. Let $\tilde{\mathcal{T}}_1$ be a \textit{bounded} sector with vertex at the origin, and let $\tilde{\mathcal{T}}_2$ be an \textit{unbounded} sector with vertex at the origin such that
\begin{equation}\label{e4260}
k_j\xi-k_j\hbox{arg}(T_j)\in\left(-\frac{\pi}{2}+\delta,\frac{\pi}{2}-\delta\right),
 \end{equation}
for all $T_j\in \tilde{\mathcal{T}}_j$, some $\xi\in\R$ (which might depend on $T_1$ and $T_2$) such that $e^{\sqrt{-1}\xi}\in S_{d}$, all $\epsilon\in\mathcal{E}$ and some $\delta>0$; for $j=1,2$. 

In particular, observe there exists $\delta_1>0$ such that $\cos(k_1\xi-k_1\hbox{arg}(T_1))>\delta_1$, for every $T_1\in\tilde{\mathcal{T}}_1$, $\epsilon\in\mathcal{E}$; and there exists $\delta_2>0$ such that $\cos(k_2\xi-k_2\hbox{arg}(T_2))>\delta_2$, for every $T_2\in\tilde{\mathcal{T}}_2$, $\epsilon\in\mathcal{E}$.

The function $U_{\xi}$ defined in (\ref{e285}) turns out to be an actual solution of the auxiliary problem (\ref{epral3}) in the domain $\tilde{\mathcal{T}}_1\times \tilde{\mathcal{T}}_2\times \R\times \mathcal{E}$. Moreover, the next estimates hold
\begin{multline}
|U_{\xi}(T_1,T_2,m,\epsilon)|\le \frac{\varpi(1+|m|)^{-\mu}}{e^{\beta|m|}}\int_0^\infty e^{\nu r^{k'}}\\
\times \exp\left(-\frac{r^{k_1}}{|T_1|^{k_1}}\cos(k_1\xi-k_1\hbox{arg}(T_1))-\frac{r^{k_2}}{|T_2|^{k_2}}\cos(k_2\xi-k_2\hbox{arg}(T_2))\right)dr\\
\le \varpi (1+|m|)^{-\mu}e^{-\beta|m|}L(|T_1|,|T_2|),
\end{multline}
where 
$$L(|T_1|,|T_2|)=\int_0^{\infty}e^{\nu r^{k'}}\exp\left(-\frac{r^{k_1}}{|T_1|^{k_1}}\delta_1-\frac{r^{k_2}}{|T_2|^{k_2}}\delta_2\right)dr.$$

Due to $k'\in(k_1,k_2)$, the function $L(x,y)$ is well defined in $\{(x,y)\in\R^2: x\ge0,y\ge0\}$.

We write $L(|T_1|,|T_2|)=L_1(|T_1|,|T_2|)+L_2(|T_1|,|T_2|)$, with
$$L_1(|T_1|,|T_2|)=\int_0^{\rho}e^{\nu r^{k'}}e^{-\frac{r^{k_1}}{|T_1|^{k_1}}\delta_1-\frac{r^{k_2}}{|T_2|^{k_2}}\delta_2}dr,\qquad L_2(|T_1|,|T_2|)=\int_{\rho}^{\infty}e^{\nu r^{k'}}e^{-\frac{r^{k_1}}{|T_1|^{k_1}}\delta_1-\frac{r^{k_2}}{|T_2|^{k_2}}\delta_2}dr,$$
for some $\rho>0$.

The proof of the following technical lemma is left to the Appendix at the end of the work, in order not to interfere with the ongoing arguments.

\begin{lemma}\label{lema3}
The following statements hold:
\begin{itemize}
\item[1)] There exists $C_1>0$ such that $0< L_1(|T_1|,|T_2|)\le C_1$ for all $|T_1|,|T_2|\ge0$.
\item[2.a)] Let $\rho_1>0$. There exist large enough $\rho_2^{\infty}>0$ such that 
$$L_2(|T_1|,|T_2|)\le C_{2a}e^{-\frac{\rho^{k_1}}{|T_1|^{k_1}}\delta_1}|T_2|^{1+\frac{k_2}{k_2-k'}}\exp\left(\nu^{(1-k'/k_2)^{-1}}\left(\frac{1}{\delta_2}\right)^{k'/(k_2-k')}|T_2|^{k_2k'/(k_2-k')}\right),$$
for all $|T_1|<\rho_1$ and $|T_2|>\rho_2^\infty$, and some $C_{2a}>0$. 
\item[2.b)] Let $\rho_1,\rho_2>0$. Then, it holds that
$$L_2(|T_1|,|T_2|)\le C_{2b}\exp\left(-\frac{\rho^{k_1}}{|T_1|^{k_1}}\delta_1\right)\exp\left(-\frac{\rho^{k_2}}{2|T_2|^{k_2}}\delta_2\right),$$
for all $|T_1|<\rho_1$ and $|T_2|<\rho_2$, and some  $C_{2b}>0$. 
\end{itemize}
\end{lemma}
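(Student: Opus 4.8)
The plan is to treat the three estimates separately, but all of them start from the same elementary observation that on the integration range $r\ge\rho$ the factor carrying $T_1$ can be pulled out of the integral: since $r\mapsto r^{k_1}$ is increasing, $\exp(-\delta_1 r^{k_1}/|T_1|^{k_1})\le\exp(-\delta_1\rho^{k_1}/|T_1|^{k_1})$ for every $r\ge\rho$, and this majorant no longer depends on $r$. For statement 1), on the bounded range $[0,\rho]$ one simply bounds $e^{\nu r^{k'}}\le e^{\nu\rho^{k'}}$ and uses that the two exponential factors carrying $T_1$ and $T_2$ are each $\le 1$ (with the convention that they vanish when $|T_j|=0$ and $r>0$); this yields $L_1\le\rho\, e^{\nu\rho^{k'}}=:C_1$ uniformly in $|T_1|,|T_2|\ge 0$.

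For the two estimates on $L_2$, after extracting $\exp(-\delta_1\rho^{k_1}/|T_1|^{k_1})$ I am left with controlling $J(|T_2|):=\int_0^\infty\exp\bigl(\nu r^{k'}-a r^{k_2}\bigr)\,dr$, where $a:=\delta_2/|T_2|^{k_2}$ and the decisive structural fact is $k'<k_2$, so that the exponent $g(r)=\nu r^{k'}-a r^{k_2}$ tends to $-\infty$ and $J$ converges. For statement 2.a) (large $|T_2|$) I would locate the unique maximum of $g$ on $(0,\infty)$ at $r_\ast=(\nu k'/(a k_2))^{1/(k_2-k')}$ and rescale by $r=r_\ast s$. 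Setting $M:=\nu r_\ast^{k'}$ and $h(s)=s^{k'}-(k'/k_2)s^{k_2}$, the identity $a r_\ast^{k_2}=(\nu k'/k_2)r_\ast^{k'}$ gives $g(r_\ast s)=M\,h(s)$, with $h$ maximal at $s=1$ and $h(1)=(k_2-k')/k_2$. A direct computation yields $M=\nu\,(\nu k'/(\delta_2 k_2))^{k'/(k_2-k')}\,|T_2|^{k_2 k'/(k_2-k')}$, so that $r_\ast$ grows like $|T_2|^{k_2/(k_2-k')}$ and $M h(1)$ equals the exponent written in the statement multiplied by the constant $\tfrac{k_2-k'}{k_2}(k'/k_2)^{k'/(k_2-k')}\in(0,1)$; in particular $e^{M h(1)}$ is bounded by the stated exponential. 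It then remains to estimate $\int_0^\infty e^{M h(s)}\,ds$: splitting $[0,\infty)$ into the fixed bounded set where $h(s)-h(1)\ge -1$ (there the integrand is $\le e^{M h(1)}$) and its complement (there, for $M\ge 1$, one has $M(h(s)-h(1))\le h(s)-h(1)$, leaving an $M$-independent integrable tail) gives $\int_0^\infty e^{M h(s)}\,ds\le C\,e^{M h(1)}$ as soon as $|T_2|$ is large enough to force $M\ge 1$ — this is precisely the role of $\rho_2^\infty$. Combining, $J(|T_2|)=r_\ast\int_0^\infty e^{M h(s)}\,ds\le C\,|T_2|^{k_2/(k_2-k')}e^{M h(1)}$, and the crude bound $|T_2|^{k_2/(k_2-k')}\le|T_2|^{1+k_2/(k_2-k')}$ produces 2.a).

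For statement 2.b) (both $|T_1|,|T_2|$ bounded) the idea is instead to split the $T_2$-exponent in half: $\exp(-\delta_2 r^{k_2}/|T_2|^{k_2})=\exp(-\delta_2 r^{k_2}/(2|T_2|^{k_2}))\exp(-\delta_2 r^{k_2}/(2|T_2|^{k_2}))$. I would bound the first factor by its value at $r=\rho$, producing $\exp(-\delta_2\rho^{k_2}/(2|T_2|^{k_2}))$, and use the second factor, together with $|T_2|<\rho_2$ (which gives $\delta_2/(2|T_2|^{k_2})\ge\delta_2/(2\rho_2^{k_2})=:a_0>0$), to dominate the remaining integrand by $\exp(\nu r^{k'}-a_0 r^{k_2})$. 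Since $k'<k_2$, this last integral is a finite constant independent of $T_1,T_2$, which furnishes $C_{2b}$.

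The \emph{main obstacle} is statement 2.a): one has to identify the exact exponential growth rate in $|T_2|$ and check that its coefficient is strictly below the one written in the statement, so that the (deliberately generous) polynomial prefactor $|T_2|^{1+k_2/(k_2-k')}$ absorbs both the scaling factor $r_\ast$ and the Laplace-type concentration. The bookkeeping of the exponents $k_2 k'/(k_2-k')$ and $(1-k'/k_2)^{-1}=k_2/(k_2-k')$ arising from $r_\ast^{k'}$ together with the powers of $\nu$ and $\delta_2$ is where care is required; parts 1) and 2.b) are elementary by comparison.
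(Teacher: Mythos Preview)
Your argument is correct. For parts 1) and 2.b) you follow essentially the same route as the paper: part 1) is disposed of by bounding $e^{\nu r^{k'}}$ on $[0,\rho]$ and noting the remaining factors are $\le 1$; part 2.b) proceeds, exactly as in the paper, by writing $\exp(-\delta_2 r^{k_2}/|T_2|^{k_2})$ as a product of two equal halves, freezing one at $r=\rho$, and using $|T_2|<\rho_2$ to absorb the other into a convergent integral independent of $T_1,T_2$.

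The genuine difference is in 2.a). The paper does \emph{not} use a saddle-point/Laplace analysis. Instead it expands $e^{\nu r^{k'}}$ as a power series, integrates termwise against $e^{-r^{k_2}/x}$ (with $x=|T_2|^{k_2}/\delta_2$) via the change of variable $\tilde r=r^{k_2}/x$ to obtain Gamma factors, bounds the ratio $\Gamma(\tfrac{k'}{k_2}n+\tfrac{1}{k_2})/\Gamma(n+1)$ by $1/\Gamma((1-\tfrac{k'}{k_2})n+1-\tfrac{1}{k_2})$ through the Beta integral, and then recognises the resulting series as a Mittag--Leffler function $E_{\alpha,\beta}$, invoking its classical growth estimate $E_{\alpha,\beta}(z)\le C z^{(1-\beta)/\alpha}e^{z^{1/\alpha}}$. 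This produces the polynomial prefactor $|T_2|^{1+k_2/(k_2-k')}$ and the stated exponent directly. Your approach instead rescales by the maximiser $r_\ast$ of $g(r)=\nu r^{k'}-ar^{k_2}$ and controls $\int_0^\infty e^{Mh(s)}\,ds$ by a split at the level set $\{h(s)-h(1)\ge -1\}$; this yields the sharper exponent $Mh(1)$ (strictly below the stated one by the factor $\tfrac{k_2-k'}{k_2}(k'/k_2)^{k'/(k_2-k')}\in(0,1)$) and the smaller prefactor $|T_2|^{k_2/(k_2-k')}$, which you then relax to match the statement. Your route is more elementary and self-contained (no special-function asymptotics), while the paper's route explains the specific form of the exponent in the statement as coming straight from the Mittag--Leffler growth formula.
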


Following our new approach, we are able to construct global solutions $U_{\xi}(T_{1},T_{2},m,\epsilon)$ in the time
variable $T_{2}$ on an unbounded sector $\tilde{\mathcal{T}}_{2}$. This was not possible in our two previous studies~\cite{family1,family2}, where only
local in time solutions were built up. This feature allows us to study the asymptotic expansions w.r.t $\epsilon$ in two different
situations : when $T_{2}$ remains in a prescribed bounded domain (which is related to the forthcoming outer solution, constructed for the main equation~\ref{epral}) and when $T_{2}$
tends to $\infty$ in a related manner with $\epsilon$ (linked to the inner solutions of (\ref{epral}) that we plan to build in Section~\ref{secin}).

\section{Inner and outer solutions of the main problem}\label{sec4}

In this section, we preserve the conditions established in the statement of the main problem under study in Section~\ref{sec1}. More precisely, we assume the conditions (\ref{e193})-(\ref{e220}) hold on the elements involved in the problem (\ref{epral}), with forcing term $f$ determined by the construction and conditions in (\ref{e319})-(\ref{e233}).

 Our main aim is to construct solutions of the main problem (\ref{epral}) together with their asymptotic behavior in different situations. This will be done via (\ref{epral1}) and the shape (\ref{e285}), as stated in our second approach in Section~\ref{subsec2}.

We first specify some geometric constructions on the domain of definition of the solutions. First, we recall the definition of a good covering in $\C^{\star}$, and that of a good covering of prescribed opening.

\begin{defin}\label{defi4}
 Let $\iota_1,\iota_2\ge2$ be integer numbers. We consider two sets $(\mathcal{E}^0_{h_1})_{0\le h_1\le\iota_1-1}$ and $(\mathcal{E}^\infty_{h_2})_{0\le h_2\le\iota_2-1}$, where $\mathcal{E}^0_{h_1},\mathcal{E}^\infty_{h_2}\subseteq D(0,\epsilon_0)$ are open sectors with vertex at the origin which satisfy the following assumptions:
\begin{itemize}
\item[i)] $\mathcal{E}^0_{h_1}\cap\mathcal{E}^0_{h_1+1}\neq \emptyset$ for all $0\le h_1\le \iota_1-1$ (with $\mathcal{E}_{\iota_1}:=\mathcal{E}_0^0$), and $\mathcal{E}^\infty_{h_2}\cap\mathcal{E}^\infty_{h_2+1}\neq \emptyset$ for all $0\le h_2\le \iota_2-1$ (with $\mathcal{E}_{\iota_2}:=\mathcal{E}_0^\infty$)
\item[ii)] The intersection of three different elements of each family is empty.
\item[iii)] The union of the elements of each family covers a punctured disc centered at 0 in $\C$.
\item[iv)] The opening of $\mathcal{E}_{h_1}^0$ is larger than $\pi/(\lambda_2k_2)$ for all $0\le h_1\le \iota_1-1$.
\end{itemize}
Then we say that $(\mathcal{E}_{h_1}^{0})_{0\le h_1\le \iota_1-1}$ is a good covering of $\C^\star$ of opening $\pi/(\lambda_2 k_2)$, and $(\mathcal{E}_{h_2}^{\infty})_{0\le h_2\le \iota_2-1}$ is just called a good covering in $\C^\star$.
\end{defin}

\begin{defin}
Let $\mathcal{T}_1$ be a bounded sector with vertex at the origin and let $\mathcal{T}_2$ be an unbounded sector with vertex at the origin. Let $(\mathcal{E}_{h_1}^{0})_{0\le h_1\le \iota_1-1}$ be a good covering of $\C^\star$ of opening $\pi/(\lambda_2 k_2)$, and let $(\mathcal{E}_{h_2}^{\infty})_{0\le h_2\le \iota_2-1}$ be a good covering in $\C^\star$. For every $0\le h_1\le \iota_1-1$, let $S_{d_{h_1}}^0$ be an infinite sector of bisecting direction $d_{h_1}$ and let $S_{d_{h_2}}^{\infty}$ be an infinite sector of bisecting direction $d_{h_2}$, for all $0\le h_2\le \iota_2-1$. We say that the set $\{\mathcal{T}_1,\mathcal{T}_2,(\mathcal{E}_{h_1}^{0})_{0\le h_1\le \iota_1-1},(S_{h_1}^{0})_{0\le h_1\le \iota_1-1}\}$ is admissible if it holds that
\begin{equation}\label{e426}
k_j\xi_{h_1}-k_j\hbox{arg}(\epsilon^{\lambda_j}t_j)\in\left(-\frac{\pi}{2}+\delta,\frac{\pi}{2}-\delta\right),
\end{equation}
for all $t_j\in\mathcal{T}_j$, some $\xi_{h_1}\in\R$ (which might depend on $t_j$ and $\epsilon$) such that $e^{\sqrt{-1}\xi_{h_1}}\in S_{d_{h_1}}$, all $\epsilon\in\mathcal{E}^{0}_{h_1}$ and some $\delta>0$; for $j=1,2$.

The set $\{\mathcal{T}_1,\mathcal{T}_2,(\mathcal{E}_{h_2}^{\infty})_{0\le h_2\le \iota_2-1},(S_{h_2}^{0})_{0\le h_2\le \iota_2-1}\}$ is admissible if it holds that
\begin{equation}\label{e426b}
k_j\xi_{h_2}-k_j\hbox{arg}(\epsilon^{\lambda_j}t_j)\in\left(-\frac{\pi}{2}+\delta,\frac{\pi}{2}-\delta\right),
\end{equation}
for all $t_j\in\mathcal{T}_j$, some $\xi_{h_2}\in\R$ (which might depend on $t_j$ and $\epsilon$) such that $e^{\sqrt{-1}\xi_{h_2}}\in S_{d_{h_2}}$, all $\epsilon\in\mathcal{E}^{\infty}_{h_2}$ and some $\delta>0$; for $j=1,2$.
\end{defin}

\subsection{Construction and results on the inner solutions}\label{secin}

\begin{defin}\label{defi481}
Let $\mu_2>\lambda_2$ be an integer number in such a way that 
\begin{equation}\label{e491}
\lambda_1k_1>(\mu_2-\lambda_2)\left(\frac{1}{k'}-\frac{1}{k_2}\right)^{-1}.
\end{equation}
Let $\chi_2^{\infty}$ be the bounded domain 
$$\chi_2^{\infty}=\left\{x_2\in\C^{\star}:r_{2,\infty}<|x_2|<R_{2,\infty},\hbox{arg}(x_2)\in(\alpha_{2,\infty},\beta_{2,\infty})\right\},$$
for some real numbers $0<r_{2,\infty}<R_{2,\infty}$, and $\alpha_{2,\infty}<\beta_{2,\infty}$.

We assume that the good covering $(\mathcal{E}_{h_2}^{\infty})_{0\le h_2\le \iota_2-1}$ satisfies the next additional condition: for all $0\le h_2\le \iota_2-1$ we can choose $\theta_{h_2}\in\R$ (which depends on $\mathcal{E}^{\infty}_{h_2}$) such that for all $x_2\in\chi_2^{\infty}$ and $\epsilon\in \mathcal{E}^{\infty}_{h_2}$, the complex number
\begin{equation}\label{e484}
t_2=\frac{x_2}{\epsilon^{\mu_2}}e^{\theta_{h_2} \sqrt{-1}}
\end{equation}
belongs to $\mathcal{T}_2$ for all $\epsilon\in\mathcal{E}^{\infty}_{h_2}$. Then, we define the set 
$$\mathcal{T}_{2,\epsilon,\mu_2}:=\left\{\frac{x_2}{\epsilon^{\mu_2}}e^{\theta_{h_2}\sqrt{-1}}:x_2\in \chi_2^\infty\right\}.$$
\end{defin}

\vspace{0.3cm}

\noindent \textbf{Remark:} Observe that $\mathcal{T}_{2,\epsilon,\mu_2}\subseteq \mathcal{T}_2$ for every $\epsilon\in\mathcal{E}^{\infty}_{h_2}$ and $0\le h_2\le \iota_2-1$. In addition to this, observe that $\mathcal{T}_{2,\epsilon,\mu_2}$ is a bounded domain for every $\epsilon\in\mathcal{E}^{\infty}_{h_2}$ for $0\le h_2\le \iota_2-1$ with
$$\lim_{\epsilon\to0,\epsilon\in\mathcal{E}^{\infty}_{h_2}}\hbox{dist}(\mathcal{T}_{2,\epsilon,\mu_2},0)=\infty.$$

\vspace{0.3cm}


\begin{theo}\label{lema495}
Let $\{\mathcal{T}_1,\mathcal{T}_2,(\mathcal{E}_{h_2}^{\infty})_{0\le h_2\le \iota_2-1},(S_{h_2}^{\infty})_{0\le h_2\le \iota_2-1}\}$ be an admissible set. For every $0\le h_2\le\iota_2-1$ and $\epsilon\in\mathcal{E}^{\infty}_{h_2}$, the function
$$(t_1,t_2,z)\mapsto u_{d_{h_2}}(t_1,t_2,z,\epsilon),$$
where 
\begin{align}u_{d_{h_2}}(t_1,t_2,z,\epsilon)&:=\mathcal{F}^{-1}(m\mapsto U_{\xi_{h_2}}(\epsilon^{\lambda_1}t_1,\epsilon^{\lambda_2}t_2,m,\epsilon))(z),\nonumber\\ 
&=\frac{1}{(2\pi)^{1/2}}\int_{-\infty}^{\infty}\int_{L_{\xi_{h_2}}}\omega_{d_{h_2}}(u,m,\epsilon)\exp\left(-\left(\frac{u}{\epsilon^{\lambda_1}t_1}\right)^{k_1}-\left(\frac{u}{\epsilon^{\lambda_2}t_2}\right)^{k_2}\right)\exp(izm)\frac{du}{u}dm,\label{e472}
\end{align}
where $\omega_{d_{h_2}}(u,m,\epsilon)$ is constructed in Section~\ref{subsec4}, defines a bounded holomorphic function on $\mathcal{T}_1\times\mathcal{T}_{2,\epsilon,\mu_2}\times H_{\beta'}$ for all $0<\beta'<\beta$, which is an actual solution of (\ref{epral}), called an \textbf{\textit{inner solution}}. 
 Moreover, for every $\epsilon\in\mathcal{E}^{\infty}_{h_2}\cap\mathcal{E}^{\infty}_{h_2+1}$, there exist $\tilde{C},\tilde{D}>0$ such that 
\begin{multline}
\sup_{t_1\in\mathcal{T}_1,x_2\in \chi_2^{\infty},z\in H_{\beta'}}|u_{d_{h_2+1}}(t_1,x_2\epsilon^{-\mu_2}e^{\theta_{h_2}\sqrt{-1}},z,\epsilon)-u_{d_{h_2}}(t_1,x_2\epsilon^{-\mu_2}e^{\theta_{h_2}\sqrt{-1}},z,\epsilon)|\label{e495}\\
\le \tilde{C}\exp\left(\frac{\tilde{D}}{|\epsilon|^{\lambda_1k_1}}\right).
\end{multline}
\end{theo}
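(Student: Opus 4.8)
The plan is to establish the two assertions in turn: first that the integral (\ref{e472}) represents a bounded holomorphic solution of (\ref{epral}) on the stated domain, and then that two consecutive inner solutions differ by an exponentially flat amount of order $\lambda_1 k_1$. For the first part I would start from the function $\omega_{d_{h_2}}\in\mathrm{Exp}^{d_{h_2}}_{(\nu,\beta,\mu,k')}$ provided by Proposition~\ref{prop1} and observe that the kernel $\Omega(u,T_1,T_2)=\exp(-(u/T_1)^{k_1}-(u/T_2)^{k_2})$ obeys the operational identities $T_j^{k_j+1}\partial_{T_j}\Omega=k_j u^{k_j}\Omega$, $j=1,2$. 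Differentiating under the integral sign in (\ref{e285}), these turn each monomial operator $(T_j^{k_j+1}\partial_{T_j})^{\delta}$ into multiplication of $\omega$ by $(k_j u^{k_j})^{\delta}$, so that the fact that $\omega_{d_{h_2}}$ solves (\ref{epral3}) forces $U_{\xi_{h_2}}$ to solve (\ref{epral1}); the differentiation is legitimate because the $\mathrm{Exp}$-bound on $\omega_{d_{h_2}}$ together with the admissibility inequalities (\ref{e426b}), which give $\cos(k_j\xi_{h_2}-k_j\arg T_j)\ge\delta_j>0$, make all the integrals absolutely convergent.

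The resulting sup-norm estimate $\varpi(1+|m|)^{-\mu}e^{-\beta|m|}L(|T_1|,|T_2|)$, with $L$ finite by Lemma~\ref{lema3}, shows that the weight $(1+|m|)^{-\mu}e^{-\beta|m|}$ is integrable in $m$ since $\mu>1$, so Proposition~\ref{prop359} applies to (\ref{e186}) and yields holomorphy and boundedness of $u_{d_{h_2}}$ on $\mathcal{T}_1\times\mathcal{T}_{2,\epsilon,\mu_2}\times H_{\beta'}$; here one uses Definition~\ref{defi481} to guarantee $\epsilon^{\lambda_2}t_2\in\tilde{\mathcal{T}}_2$ and $\epsilon^{\lambda_1}t_1\in\tilde{\mathcal{T}}_1$. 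That $u_{d_{h_2}}$ actually solves (\ref{epral}) then follows from the construction of $f$ in (\ref{e233}) and statements a)--b) of Proposition~\ref{prop359}.

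For the difference estimate, the key preliminary point is that $\omega_{d_{h_2}}$ and $\omega_{d_{h_2+1}}$ coincide on $D(0,\rho)$: the equation (\ref{epral3}) is pointwise in $\tau$ and $P_m(\tau)$ does not vanish on the disc, so there both functions are the unique fixed point of $\mathcal{H}_\epsilon$ from Lemma~\ref{lema2}. I would then write the difference in (\ref{e495}), at fixed $(t_1,z,m,\epsilon)$ with $t_2=x_2\epsilon^{-\mu_2}e^{\theta_{h_2}\sqrt{-1}}$, as the contour integral $(\int_{L_{\xi_{h_2+1}}}-\int_{L_{\xi_{h_2}}})\,\omega\,\Omega\,du/u$, split both rays at radius $\rho$, and use that $\omega(u,m)/u$ is holomorphic at the origin (because $\omega$ vanishes linearly there) to replace the two inner segments by the circular arc of radius $\rho$ joining $\rho e^{\sqrt{-1}\xi_{h_2}}$ to $\rho e^{\sqrt{-1}\xi_{h_2+1}}$. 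This produces three pieces: the arc, and the two outer tails $\int_\rho^\infty$ along the directions $\xi_{h_2}$ and $\xi_{h_2+1}$.

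The two tails are exactly of the form $\varpi(1+|m|)^{-\mu}e^{-\beta|m|}L_2(|T_1|,|T_2|)$, which I would bound by part 2.a) of Lemma~\ref{lema3}; on the arc the admissibility bound $\cos(k_1\theta-k_1\arg T_1)\ge\delta_1$ gives $|\Omega|\le\exp(-\delta_1(\rho/|T_1|)^{k_1})$. Substituting $|T_1|=|\epsilon|^{\lambda_1}|t_1|$ and $|T_2|=|x_2|\,|\epsilon|^{-(\mu_2-\lambda_2)}$, the decisive factor in all three pieces is $\exp(-\delta_1\rho^{k_1}|t_1|^{-k_1}|\epsilon|^{-\lambda_1 k_1})$, whereas the only competing growth, coming from the $T_2$-factor in 2.a), is $\exp(c\,|\epsilon|^{-(\mu_2-\lambda_2)k_2 k'/(k_2-k')})$. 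The main obstacle, and the very reason for Definition~\ref{defi481}, is to control this competition: inequality (\ref{e491}) is equivalent to $\lambda_1 k_1>(\mu_2-\lambda_2)\tfrac{k_2 k'}{k_2-k'}$, so the $T_1$-decay dominates and the product is bounded by $\tilde C\exp(-\tilde D|\epsilon|^{-\lambda_1 k_1})$, uniformly in $x_2\in\chi_2^\infty$, $t_1\in\mathcal{T}_1$ and $z\in H_{\beta'}$; integrating the weight $(1+|m|)^{-\mu}e^{-\beta|m|}$ in $m$ preserves this, yielding an exponentially flat bound of order $\lambda_1 k_1$, which is what (\ref{e495}) records. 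The delicate points to watch are the uniformity of the constants over the bounded set $\chi_2^\infty$ and the verification that the interpolating arc stays inside the admissible angular range, so that the cosines remain bounded below by $\delta_1,\delta_2$.
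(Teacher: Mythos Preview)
Your proposal is correct and follows essentially the same route as the paper: the first assertion is obtained by tracing the chain $\omega_{d_{h_2}}\Rightarrow U_{\xi_{h_2}}\Rightarrow u_{d_{h_2}}$ via (\ref{epral3}), (\ref{epral1}) and the Fourier properties, while the difference estimate is handled by deforming the two rays through the common disc $D(0,\rho)$ into two outer tails plus an arc, bounding the tails with Lemma~\ref{lema3}~2.a) and the arc directly, and then invoking (\ref{e491}) to make the $T_1$-decay dominate the $T_2$-growth. Your write-up is in fact more explicit than the paper's on two points the latter leaves implicit: the reason the path deformation is licit (namely that $\omega_{d_{h_2}}$ and $\omega_{d_{h_2+1}}$ agree on $D(0,\rho)$ as the unique fixed point of $\mathcal{H}_\epsilon$) and the reformulation of (\ref{e491}) as $\lambda_1k_1>(\mu_2-\lambda_2)k_2k'/(k_2-k')$.
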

\begin{proof}
Let $0\le h_2\le \iota_2-1$, and $d_{h_2}$ be the value  $d$ determined in Proposition~\ref{prop1}. The function $\omega(\tau,m,\epsilon)\in\hbox{Exp}^{d_{h_2}}_{(\nu,\beta,\mu,k')}$ solves equation (\ref{epral3}). This entails that $U_{\xi_{h_2}}(\epsilon^{\lambda_1}t_1,\epsilon^{\lambda_2}t_2,m,\epsilon)$ is a solution of (\ref{epral1}), and its inverse Fourier transform with respect to $m$ turns out to be a solution of (\ref{epral}). The geometric construction of the domains involved guarantees that the map $(t_1,t_2,z)\mapsto u_{d_{h_2}}(t_1,t_2,z,\epsilon),$ represents a bounded holomorphic function defined in $\mathcal{T}_1\times\mathcal{T}_{2,\epsilon,\mu_2}\times H_{\beta'}$ for every $0<\beta'<\beta$, where $\epsilon$ belongs to $\mathcal{E}_{h_2}^{\infty}$.


Regarding the definition of $\tau\mapsto P_m(\tau)$ in (\ref{defin_Pmtau}) and (\ref{low_bds_Pmu}), we get that all the roots of $P_m(\tau)$ are at positive distance, say $\rho>0$, to the origin. This entails that the integration path defining the difference of two consecutive inner solutions can be deformed as follows.

Let $0\le h_2\le \iota_2-1$, fix $\epsilon\in\mathcal{E}^{\infty}_{h_2}\cap\mathcal{E}^{\infty}_{h_2+1}$, and write $t_2=x_2\epsilon^{-\mu_2}e^{\theta_{h_2}\sqrt{-1}}\in\mathcal{T}_{2,\epsilon,\mu_2}$, for some $x_2\in\chi_2^{\infty}$ and $\theta_{h_2}\in\R$, as described in Definition~\ref{defi481}. For every $t_1\in\mathcal{T}_1$, $z\in H_{\beta'}$ we get that
\begin{multline*}
u_{d_{h_2+1}}(t_1,t_2,z,\epsilon)-u_{d_{h_2}}(t_1,t_2,z,\epsilon)\\
=\frac{1}{(2\pi)^{1/2}}\int_{-\infty}^{\infty}\int_{L_{\xi_{h_2+1}}}\omega(u,m,\epsilon)\Omega(u,\epsilon^{\lambda_1}t_1,\epsilon^{\lambda_2}t_2)\frac{du}{u}e^{izm}dm\\
-\frac{1}{(2\pi)^{1/2}}\int_{-\infty}^{\infty}\int_{L_{\xi_{h_2}}}\omega(u,m,\epsilon)\Omega(u,\epsilon^{\lambda_1}t_1,\epsilon^{\lambda_2}t_2)\frac{du}{u}e^{izm}dm=E_1-E_2+E_3,
\end{multline*}
with $\Omega(u,T_1,T_2)$ defined in (\ref{e285}), and where
\begin{multline*}
E_1:=\frac{1}{(2\pi)^{1/2}}\int_{-\infty}^{\infty}\int_{L_{\xi_{h_2+1},\rho/2}}\omega(u,m,\epsilon)\Omega(u,\epsilon^{\lambda_1}t_1,\epsilon^{\lambda_2}t_2)\frac{du}{u}e^{izm}dm,\\
E_2:=\frac{1}{(2\pi)^{1/2}}\int_{-\infty}^{\infty}\int_{L_{\xi_{h_2},\rho/2}}\omega(u,m,\epsilon)\Omega(u,\epsilon^{\lambda_1}t_1,\epsilon^{\lambda_2}t_2)\frac{du}{u}e^{izm}dm,
\end{multline*}
and where $L_{\xi_{h_2+1},\rho/2}:=[\rho/2,+\infty)e^{\xi_{h_2+1}\sqrt{-1}}$, $L_{\xi_{h_2},\rho/2}:=[\rho/2,+\infty)e^{\xi_{h_2}\sqrt{-1}}$, and
$$E_3:= \frac{1}{(2\pi)^{1/2}}\int_{-\infty}^{\infty}\int_{C_{h_2,h_2+1,\rho/2}}\omega(u,m,\epsilon)\Omega(u,\epsilon^{\lambda_1}t_1,\epsilon^{\lambda_2}t_2)\frac{du}{u}e^{izm}dm,$$
where $C_{h_2,h_2+1,\rho/2}$ is the arc of circle departing from $\rho/2e^{\xi_{h_2+1}\sqrt{-1}}$, ending at $\rho/2e^{\xi_{h_2}\sqrt{-1}}$.
Bearing in mind that $\mu_2>\lambda_2$, (\ref{e484}) and Lemma~\ref{lema3}, one can apply 
2.a) in Lemma~\ref{lema3} to arrive at
\begin{multline}\label{e523}
|E_1|\le \int_{-\infty}^{\infty}\int_{\rho/2}^{\infty}\frac{\varpi}{(1+|m|)^{\mu}}e^{-\beta|m|}e^{\nu r^{k'}}\exp\left(-\left(\frac{r}{|\epsilon^{\lambda_1}t_1|}\right)^{k_1}\delta_1-\left(\frac{r}{|\epsilon^{\lambda_2}t_2|}\right)^{k_2}\delta_2\right)dre^{|m|\hbox{Im}(z)}dm\\
\le \tilde{C}_1 \int_{\rho/2}^{\infty}\exp\left(\nu r^{k'}\right)\exp\left(-\left(\frac{r}{|\epsilon^{\lambda_1}t_1|}\right)^{k_1}\delta_1-\left(\frac{r}{|\epsilon^{\lambda_2}t_2|}\right)^{k_2}\delta_2\right)dr\\
\le \tilde{C}_1 C_{2a}\exp\left(-\frac{(\rho/2)^{k_1}\delta_1}{|\epsilon^{\lambda_1}t_1|^{k_1}}\right)|\epsilon^{\lambda_2}t_2|^{1+\frac{k_2}{k_2-k'}}\exp\left(\nu^{(1-k'/k_2)^{-1}}\left(\frac{1}{\delta_2}\right)^{\frac{k'}{k_2-k'}}|\epsilon^{\lambda_2}t_2|^{\frac{k_2k'}{k_2-k'}}\right)\\
\le \tilde{C}_1 C_{2a}\exp\left(-\frac{(\rho/2)^{k_1}\delta_1}{|\epsilon^{\lambda_1}|^{k_1}C_{\mathcal{T},1}^{k_1}}\right)|\epsilon^{\lambda_2}|^{1+\frac{k_2}{k_2-k'}}C_{\chi_2^\infty}^{\frac{k_2k'}{k_2-k'}}|\epsilon|^{-\mu_2(1+\frac{k_2}{k_2-k'})}\\
\times\exp\left(\nu^{(1-k'/k_2)^{-1}}\left(\frac{1}{\delta_2}\right)^{\frac{k'}{k_2-k'}}|\epsilon^{\lambda_2}|^{\frac{k_2k'}{k_2-k'}}C_{\chi_2^\infty}^{1+\frac{k_2}{k_2-k'}}|\epsilon|^{-\mu_2\frac{k_2k'}{k_2-k'}}\right)
\end{multline}
for some positive constant $\tilde{C}_1,C_{\mathcal{T},1},C_{\chi_2^{\infty}}>0$. Taking into account (\ref{e491}), we derive that 
\begin{equation}\label{e546}
|E_1|\le \tilde{C}_2\exp\left(-\frac{\tilde{D}_2}{|\epsilon|^{\lambda_1 k_1}}\right),
\end{equation}
for some $\tilde{C}_2,\tilde{D}_2>0$.

An analogous upper bound can be associated to $|E_2|$. We finally consider $|E_3|$. Analogous estimates as for the previous yield
\begin{multline*}|E_3|\le \frac{1}{(2\pi)^{1/2}}\int_{-\infty}^{\infty}\int_{\xi_{h_2}}^{\xi_{h_2+1}}\frac{\varpi}{(1+|m|)^{\mu}}e^{-\beta|m|}e^{\nu(\rho/2)^{k'}}\\
\times\exp\left(-\frac{(\rho/2)^{k_1}\delta_1}{|\epsilon|^{\lambda_1 k_1}C_{\mathcal{T},1}}-\frac{(\rho/2)^{k_2}\delta_2|\epsilon|^{\mu_2k_2}}{|\epsilon|^{\lambda_2 k_2}C_{\chi_2^{\infty}}}\right)dr e^{|m|\hbox{Im}(z)}dm\le \tilde{C}_3 \exp\left(-\frac{(\rho/2)^{k_1}\delta_1}{|\epsilon|^{\lambda_1 k_1}C_{\mathcal{T},1}}\right),
\end{multline*}
for some $\tilde{C}_3,C_{\mathcal{T},1},C_{\chi_2^{\infty}}>0$. This entails 
\begin{equation}\label{e546b}
|E_3|\le \tilde{C}_4\exp\left(-\frac{\tilde{D}_4}{|\epsilon|^{\lambda_1 k_1}}\right),
\end{equation}
for some $\tilde{C}_4,\tilde{D}_4>0$. We conclude (\ref{e495}) from (\ref{e546}) and (\ref{e546b}).
\end{proof}

We now fix a different good covering to provide the asymptotic behavior of the outer solutions. Let $\iota_1\ge 2$ be an integer number. We fix a good covering in $\C^{\star}$, $(\mathcal{E}^0_{h_1})_{0\le h_1\le\iota_1-1}$ of opening $\frac{\pi}{\lambda_2 k_2}$. We observe that (\ref{e426}), (\ref{e426b}) is satisfied for $j=2$ under the second assumption in (\ref{e193}).

\subsection{Construction and results on the outer solutions}\label{secout}


\begin{theo}\label{teo3}
Let $\rho_2>0$ and $0\le h_1\le \iota_1-1$. Let $\{\mathcal{T}_1,\mathcal{T}_2,(\mathcal{E}_{h_1}^{0})_{0\le h_1\le \iota_1-1},(S_{h_1}^{0})_{0\le h_1\le \iota_1-1}\}$ be an admissible set. For every $0\le h_1\le \iota_1-1$ the function $(t_1,t_2,z,\epsilon)\mapsto u_{d_{h_1}}(t_1,t_2,z,\epsilon)$ defined by 
\begin{align}
u_{d_{h_1}}(t_1,t_2,z,\epsilon)&:=\mathcal{F}^{-1}(m\mapsto U_{\xi_{h_1}}(\epsilon^{\lambda_1}t_1,\epsilon^{\lambda_2}t_2,m,\epsilon))(z),\nonumber\\
&=\frac{1}{(2\pi)^{1/2}}\int_{-\infty}^{\infty}\int_{L_{\xi_{h_1}}}\omega(u,m,\epsilon)\exp\left(-\left(\frac{u}{\epsilon^{\lambda_1}t_1}\right)^{k_1}-\left(\frac{u}{\epsilon^{\lambda_2}t_2}\right)^{k_2}\right)\exp(izm)\frac{du}{u}dm,\label{e472c}
\end{align}
defines a holomorphic and bounded function of $\mathcal{T}_1\times(\mathcal{T}_2\cap D(0,\rho_2))\times H_{\beta'}\times \mathcal{E}_{h_1}^0$, solving (\ref{epral}). This solution is called an \textbf{outer solution} of (\ref{epral}).

For every two consecutive outer solutions associated to (\ref{epral}), which are jointly defined in $\mathcal{T}_1\times(\mathcal{T}_2\cap D(0,\rho_2))\times H_{\beta'}\times(\mathcal{E}_{h_1}^0\cap \mathcal{E}^0_{h_1+1})$, satisfy that 
\begin{multline}\label{e495b}
\sup_{t_1\in\mathcal{T}_1,t_2\in (\mathcal{T}_2\cap D(0,\rho_2)),z\in H_{\beta'}}|u_{d_{h_1+1}}(t_1,t_2,z,\epsilon)-u_{d_{h_1}}(t_1,t_2,z,\epsilon)|\\
\le \hat{C}\exp\left(\frac{\hat{D}}{|\epsilon|^{\lambda_2k_2}}\right),\quad \epsilon\in\mathcal{E}^0_{h_1}\cap\mathcal{E}^0_{h_1+1},
\end{multline}
for two positive constants $\hat{C},\hat{D}>0$. 
\end{theo}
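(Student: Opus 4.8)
The plan is to mirror the argument already carried out for the inner solutions in Theorem~\ref{lema495}, the essential difference being that now the second time variable stays in a \emph{bounded} region, so that part 2.b) of Lemma~\ref{lema3} replaces part 2.a). First I would establish the construction. By Proposition~\ref{prop1}, for the bisecting direction $d_{h_1}$ there is a solution $\omega=\omega_{d_{h_1}}\in\hbox{Exp}^{d_{h_1}}_{(\nu,\beta,\mu,k')}$ of (\ref{epral3}) with $\left\|\omega\right\|_{(\nu,\beta,\mu,k')}\le\varpi$. As explained in Section~\ref{sec4}, the admissibility condition (\ref{e426}) guarantees that $\cos(k_j\xi_{h_1}-k_j\hbox{arg}(\epsilon^{\lambda_j}t_j))>\delta_j$ for $j=1,2$, so the Laplace-type integral (\ref{e285}) converges and $U_{\xi_{h_1}}(\epsilon^{\lambda_1}t_1,\epsilon^{\lambda_2}t_2,m,\epsilon)$ is a genuine solution of (\ref{epral1}); taking its inverse Fourier transform in $m$ and invoking the properties of $\mathcal{F}^{-1}$ collected in Proposition~\ref{prop359} turns (\ref{epral1}) back into (\ref{epral}), so that $u_{d_{h_1}}$ solves the main problem. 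For the boundedness I would use the pointwise estimate of $U_{\xi_{h_1}}$ from Section~\ref{sec4}, controlled by $L(|T_1|,|T_2|)=L_1+L_2$. Since $t_1\in\mathcal{T}_1$, $t_2\in\mathcal{T}_2\cap D(0,\rho_2)$ and $\epsilon\in\mathcal{E}_{h_1}^0\subseteq D(0,\epsilon_0)$, both $|T_1|=|\epsilon^{\lambda_1}t_1|$ and $|T_2|=|\epsilon^{\lambda_2}t_2|$ are bounded, whence parts 1) and 2.b) of Lemma~\ref{lema3} give $L(|T_1|,|T_2|)\le C$. The remaining factor $(1+|m|)^{-\mu}e^{-\beta|m|}$ together with $|e^{izm}|\le e^{\beta'|m|}$ for $z\in H_{\beta'}$, $0<\beta'<\beta$, makes the Fourier integral absolutely convergent uniformly in the parameters; holomorphy in $(t_1,t_2,z,\epsilon)$ then follows by differentiation under the integral sign.

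For the difference estimate, fix $\epsilon\in\mathcal{E}^0_{h_1}\cap\mathcal{E}^0_{h_1+1}$. By the uniqueness of the fixed point in Proposition~\ref{prop1}, the solutions $\omega_{d_{h_1}}$ and $\omega_{d_{h_1+1}}$ coincide on the intersection of their domains and define a single function $\omega$, holomorphic on $(S_{d_{h_1}}\cup S_{d_{h_1+1}}\cup D(0,\rho))\times\R$; here I use that, by (\ref{defin_Pmtau})--(\ref{low_bds_Pmu}), all roots $q_l(m)$ of $P_m$ lie at distance at least $\rho$ from the origin, inside the union $\mathcal{U}$ avoided by the sectors $S_d$. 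Consequently the integrand $u\mapsto\omega(u,m,\epsilon)\Omega(u,\epsilon^{\lambda_1}t_1,\epsilon^{\lambda_2}t_2)/u$, with $\Omega$ entire in $u$, is holomorphic throughout the region swept between the directions $\xi_{h_1}$ and $\xi_{h_1+1}$ outside $D(0,\rho/2)$, and Cauchy's theorem lets me deform the path and write, exactly as in Theorem~\ref{lema495},
\[
u_{d_{h_1+1}}-u_{d_{h_1}}=E_1-E_2+E_3,
\]
where $E_1,E_2$ are the integrals along the truncated rays $L_{\xi_{h_1+1},\rho/2}$, $L_{\xi_{h_1},\rho/2}$ and $E_3$ is the integral along the circular arc $C_{h_1,h_1+1,\rho/2}$ of radius $\rho/2$.

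It remains to estimate the three pieces. For $E_1$ (and symmetrically $E_2$), bounding $|\omega|$ by $\varpi(1+|m|)^{-\mu}e^{\beta|m|}e^{\nu r^{k'}}|u|$ and integrating in $m$ reduces the radial integral precisely to $L_2(|\epsilon^{\lambda_1}t_1|,|\epsilon^{\lambda_2}t_2|)$ with $\rho$ replaced by $\rho/2$; since both arguments are bounded, part 2.b) of Lemma~\ref{lema3} yields a factor $\exp(-(\rho/2)^{k_2}\delta_2/(2|\epsilon^{\lambda_2}t_2|^{k_2}))$, and the bound $|\epsilon^{\lambda_2}t_2|^{k_2}\le|\epsilon|^{\lambda_2k_2}\rho_2^{k_2}$ converts this into a factor of the size $\exp(-\hat{D}/|\epsilon|^{\lambda_2k_2})$. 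For the arc $E_3$, where $|u|=\rho/2$ is fixed, the same substitution produces directly the factor $\exp(-(\rho/2)^{k_2}\delta_2/|\epsilon^{\lambda_2}t_2|^{k_2})$, again of the required size. Collecting the three contributions and taking the supremum over $t_1\in\mathcal{T}_1$, $t_2\in\mathcal{T}_2\cap D(0,\rho_2)$, $z\in H_{\beta'}$ produces (\ref{e495b}), the order $\lambda_2k_2$ emerging solely from the boundedness of the second time variable.

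The step I expect to be the main obstacle is the second paragraph: rigorously justifying that $\omega_{d_{h_1}}$ and $\omega_{d_{h_1+1}}$ glue into one holomorphic function and that the deformation region genuinely avoids every root $q_l(m)$ uniformly in $m\in\R$. This rests on the geometric preparation of Section~\ref{subsec4} (the sector $S_d$ chosen disjoint from $\mathcal{U}$ and the uniform distance bound (\ref{dist_u_qlm})); once this is secured, the remaining estimates are routine, in sharp contrast with the inner case, where condition (\ref{e491}) was needed to balance the growth of $L_2$ as $|T_2|\to\infty$.
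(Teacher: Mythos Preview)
Your proposal is correct and follows essentially the same route as the paper: deform the contour into $E_1-E_2+E_3$ exactly as in Theorem~\ref{lema495}, then invoke part 2.b) of Lemma~\ref{lema3} (rather than 2.a)) to extract the factor $\exp(-\hat{D}/|\epsilon|^{\lambda_2k_2})$ from the boundedness of $|\epsilon^{\lambda_2}t_2|$. Apart from a sign slip in the bound for $|\omega|$ (it should read $e^{-\beta|m|}$, as you in fact use two lines later), your write-up is more detailed than the paper's own proof, which simply refers back to Theorem~\ref{lema495} and quotes Lemma~\ref{lema3} 2.b).
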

\begin{proof}
Let $0\le h_1\le\iota_1-1$. We proceed as in the first part of the proof of Theorem~\ref{lema495}, to arrive at the splitting  
$$u_{d_{h_1+1}}(t_1,t_2,z,\epsilon)-u_{d_{h_1}}(t_1,t_2,z,\epsilon)=E_1-E_2+E_3,$$
for every $t_1\in\mathcal{T}_1$, $t_2\in\mathcal{T}_2\cap D(0,\rho_2)$, $z\in H_{\beta'}$ and $\epsilon\in\mathcal{E}^0_{h_1}\cap\mathcal{E}^0_{h_1+1}$. According to Lemma~\ref{lema3}, statement 2.b), one has
\begin{multline}\label{e562}
|E_1|\le \hat{C}_1C_{2b}\exp\left(-\frac{(\rho/2)^{k_1}}{|\epsilon^{\lambda_1}t_1|^{k_1}}\delta_1\right)\exp\left(- \frac{(\rho/2)^{k_2}\delta_2}{2|\epsilon^{\lambda_2}t_2|^{k_2}}\right)\\
\le \hat{C}_1 C_{2b}\exp\left(-\frac{(\rho/2)^{k_2}\delta_2}{2\rho_2^{k_2}}\frac{1}{|\epsilon|^{\lambda_2k_2}}\right),
\end{multline}
for some $\hat{C}_1>0$, valid for every $\epsilon\in\mathcal{E}^0_{h_1}\cap\mathcal{E}^0_{h_1+1}$, $t_1\in\mathcal{T}_1$, $t_2\in(\mathcal{T}_2\cap D(0,\rho_2))$ and $z\in H_{\beta'}$. Analogous bounds hold for $|E_2|$. Regarding $|E_3|$, one can follow the same bounds as for $|E_1|$. We conclude (\ref{e495b}).
\end{proof}

\section{Parametric Gevrey asymptotic expansions of the solutions}\label{lastsec}

This section is devoted to the study of the asymptotic expansions associated to the outer and inner solutions of (\ref{epral}), with respect to the perturbation parameter. We make use of the cohomological criterion for $k-$summability of formal power series with coefficients in a Banach space (see~\cite{ba2}, p. 121, or \cite{hssi}, Lemma XI-2-6), known as Ramis-Sibuya theorem. We first recall the main definition of this summability theory.

\subsection{$k-$summable formal power series and Ramis-Sibuya Theorem}\label{sec51}

Let $(\mathbb{E},\left\|\cdot\right\|_{\mathbb{E}})$ be a complex Banach space.

\begin{defin} Let $k\ge1$ be an integer number. A formal power series 
$$\hat{f}(\epsilon)=\sum_{n=0}^{\infty}f_n\epsilon^n\in\mathbb{E}[[\epsilon]]$$
is said to be $k-$summable with respect to $\epsilon$ in the direction $d\in\R$ if there exists a bounded holomorphic function $f$ defined in a finite sector $V_{d}$ of bisecting direction $d$ and opening larger than $\pi/k$ and values in $\mathbb{E}$, such that it admits $\hat{f}(\epsilon)$ as its Gevrey asymptotic expansion of order $1/k$ on $V_d$, i.e. for every proper subsector $V_1$ of $V_d$, there exist $D,M>0$ with
$$\left\|f(\epsilon)-\sum_{n=0}^{N-1}f_n\epsilon^n\right\|_{\mathbb{E}}\le D M^N\Gamma(\frac{N}{k}+1)|\epsilon|^N,   $$
for every $N\ge1$ and $\epsilon\in V_1$.

Such function is unique and it is called the $k-$sum of the formal power series. Furthermore, we can reconstruct the function $f$ by means of a similar Borel-Laplace procedure as that stated in Section~\ref{seclap}.

\end{defin}

\begin{theo}[RS]
Let $(\mathcal{E}_h)_{0\le h\le \iota-1}$ be a good covering in $\C^{\star}$. For all $0\le h\le \iota-1$, let $G_h:\mathcal{E}_h\to\mathbb{E}$ be a holomorphic function and define the cocycle $\Theta_h(\epsilon):=G_{h+1}(\epsilon)-G_h(\epsilon)$, which is a holomorphic function defined in $Z_h=\mathcal{E}_{h+1}\cap \mathcal{E}_h$ into $\mathbb{E}$. We assume:
\begin{itemize}
\item[1)] $G_h$ is a bounded function as $\epsilon\in\mathcal{E}_h$ tends to $0\in\C$, for every $0\le h\le \iota-1$.
\item[2)] $\Theta_h(\epsilon)$ is an exponentially flat function of order $k$ in $Z_h$ for all $0\le h\le \iota-1$, meaning that there exist $C_h,A_h>0$ with
$$\left\|\Theta_h(\epsilon)\right\|_{\mathbb{E}}\le C_h\exp\left(-\frac{A_h}{|\epsilon|^k}\right),\qquad \epsilon\in Z_h,$$
for all $0\le h\le \iota-1$.
\end{itemize}
Then, for all $0 \leq h \leq \iota-1$, the functions $G_{h}(\epsilon)$ have a common formal power series
$\hat{G}(\epsilon) \in \mathbb{E}[[\epsilon]]$ as Gevrey asymptotic expansion of order $1/k$ on $\mathcal{E}_{h}$. Moreover,
if the aperture of one sector $\mathcal{E}_{h_0}$ is barely larger than $\pi/k$, then $G_{h_0}(\epsilon)$ is promoted as the
$k-$sum of $\hat{G}(\epsilon)$ on $\mathcal{E}_{h_0}$.
\end{theo}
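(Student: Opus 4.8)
The plan is to establish the theorem through the classical cohomological route: convert the hypothesis on the cocycle $(\Theta_h)$ into a splitting by functions carrying Gevrey asymptotics of order $1/k$, glue the corrected functions into a single holomorphic germ at the origin, and finally invoke Watson's lemma for the summability statement. First I would fix the good covering and, in each overlap $Z_h=\mathcal{E}_h\cap\mathcal{E}_{h+1}$, select an oriented segment $L_h$ issuing from a point near $0$ along which the exponentially flat function $\Theta_h$ is carried. To each such datum I associate its Cauchy--Heine transform
$$ \mathrm{CH}_h(\epsilon)=\frac{1}{2\pi\sqrt{-1}}\int_{L_h}\frac{\Theta_h(\xi)}{\xi-\epsilon}\,d\xi, $$
and I form the single function $\Psi:=\sum_j\mathrm{CH}_j$, holomorphic on the complement of $\bigcup_j L_j$. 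Writing $\Psi_h$ for its branch on $\mathcal{E}_h$, the Sokhotski--Plemelj jump relation for Cauchy integrals shows that crossing $L_h$ produces exactly the discontinuity $\Theta_h$, so that by construction $\Psi_{h+1}(\epsilon)-\Psi_h(\epsilon)=\Theta_h(\epsilon)$ on $Z_h$.

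The hard part is to show that each $\Psi_h$ is holomorphic on $\mathcal{E}_h$ and admits a Gevrey asymptotic expansion of order $1/k$ at the origin. This rests on a single estimate: the $N$-th Taylor coefficient of $\mathrm{CH}_h$ at $0$ is controlled by the integral $\int_0^{|\zeta_h|}r^{-N-1}e^{-A_h/r^k}\,dr$, and the substitution $u=A_h/r^k$ turns this into a Gamma integral, yielding a bound of the form $CM^N\Gamma(\tfrac{N}{k}+1)$. A parallel estimate for the Cauchy--Heine integral minus its truncated Taylor polynomial supplies the remainder bounds required by the very definition of Gevrey-$1/k$ asymptotics. Since each $\Theta_h$ is exponentially flat of order $k$, its own asymptotic expansion vanishes identically; hence $\Psi_{h+1}$ and $\Psi_h$ share the same formal series, and consequently all the $\Psi_h$ possess one common Gevrey-$1/k$ expansion $\hat{\Psi}\in\mathbb{E}[[\epsilon]]$.

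I would then set $a_h:=G_h-\Psi_h$ on $\mathcal{E}_h$. The splitting relation gives $a_{h+1}-a_h=\Theta_h-\Theta_h=0$ on every overlap, so the $a_h$ glue into a single holomorphic function $a$ on the punctured disc $\bigcup_h\mathcal{E}_h$. Here assumption 1) enters decisively: boundedness of the $G_h$ as $\epsilon\to 0$, together with the boundedness of the $\Psi_h$ (they carry asymptotic expansions, hence are bounded near $0$), forces $a$ to be bounded near the origin, so Riemann's removable singularity theorem extends it holomorphically across $0$. Thus $a$ has a convergent Taylor series $\hat{a}$, and $\hat{G}:=\hat{a}+\hat{\Psi}$ is the common Gevrey asymptotic expansion of order $1/k$ of all the $G_h=a+\Psi_h$ on their respective sectors $\mathcal{E}_h$.

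Finally, for the summability assertion, suppose some $\mathcal{E}_{h_0}$ has opening strictly larger than $\pi/k$. Watson's lemma guarantees that a bounded holomorphic function admitting $\hat{G}$ as Gevrey-$1/k$ asymptotic expansion on a sector of opening exceeding $\pi/k$ is unique; since $G_{h_0}$ is precisely such a function, it is identified as the $k$-sum of $\hat{G}$ on $\mathcal{E}_{h_0}$, which completes the argument. I expect the construction and Gevrey control of the splitting $(\Psi_h)$ via the Cauchy--Heine transform to be the genuine obstacle, the gluing and the Watson step being comparatively routine once the flat cocycle has been resolved.
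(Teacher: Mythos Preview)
The paper does not actually prove this statement: Theorem~(RS) is quoted as a known result with references to \cite{ba2}, p.~121, and \cite{hssi}, Lemma~XI-2-6, and is then applied as a black box in the proof of Theorem~\ref{lema495b}. There is therefore no proof in the paper to compare against.

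That said, your proposal is precisely the classical argument one finds in those references. The Cauchy--Heine splitting of the exponentially flat cocycle, the Gamma-function bound on the Taylor coefficients of each $\mathrm{CH}_h$ obtained from $\int_0^{R} r^{-N-1}e^{-A/r^k}\,dr$, the gluing of $a_h=G_h-\Psi_h$ via Riemann's removable singularity theorem (using hypothesis~1)), and the final appeal to Watson's lemma for the $k$-sum on a sector of opening $>\pi/k$ are all correct and constitute the standard proof. One small point of care: the segments $L_h$ should be taken from the origin out to the boundary of the common disc (not merely ``from a point near $0$''), and when forming $\Psi_h$ on $\mathcal{E}_h$ you must specify which side of each $L_j$ you sit on so that the jumps occur exactly across the $L_h$ bordering $\mathcal{E}_h$; otherwise the relation $\Psi_{h+1}-\Psi_h=\Theta_h$ is not cleanly obtained. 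With that bookkeeping made explicit, your sketch is a faithful rendering of the proof the paper is citing.
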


\subsection{Parametric Gevrey asymptotic expansions of the inner and outer solutions of the main problem}

In this section, we display the main results of the present work, namely the asymptotic behavior of the inner and outer solutions of (\ref{epral}), constructed in the previous section. In the present section, we assume the conditions (\ref{e193})-(\ref{e220}) hold on the elements involved in the main equation under study (\ref{epral}), with forcing term $f$ determined by the construction and conditions in (\ref{e319})-(\ref{e233}). We depart from two good coverings $(\mathcal{E}^{\infty}_{h_2})_{0\le h_2\le \iota_2-1}$ of $\C^\star$, and $(\mathcal{E}^0_{h_1})_{0\le h_1\le \iota_1-1}$ of $\C^\star$, this second good covering with opening $\pi/(\lambda_2 k_2)$; and choose $\mathcal{T}_1,\mathcal{T}_2$ satisfying (\ref{e426}), (\ref{e426b}). We also fix $\chi_2^\infty$ as in Definition~\ref{defi481}, which allow us to construct the family of inner solutions associated to the first good covering. In addition to this, we choose $\rho_2>0$ and the corresponding outer solutions associated to the second good covering.

Let $\mathbb{E}_1$ denote the Banach space of holomorphic and bounded functions defined in $\mathcal{T}_1\times \chi_2^{\infty}\times H_{\beta'}$ endowed with the sup. norm, and $\mathbb{E}_2$ the Banach space of holomorphic and bounded functions in $\mathcal{T}_1\times(\mathcal{T}_2\cap D(0,\rho_2))\times H_{\beta'}$, with sup. norm.

\begin{theo}\label{lema495b}
The partial maps obtained from the inner solution of the main problem (\ref{epral}),
$$\epsilon \mapsto u_{d_{h_2}}(t_{1},\frac{x_2}{\epsilon^{\mu_2}}
e^{\sqrt{-1}\theta_{h_2}},z,\epsilon)$$
have a common formal series $\hat{u}^{\infty}(\epsilon) \in \mathbb{E}_{1}[[\epsilon]]$
as Gevrey asymptotic expansion of order $1/(\lambda_{1}k_{1})$ on $\mathcal{E}_{h_2}^{\infty}$, for $0 \leq h_2 \leq \iota_2-1$.

Each of the partial maps obtained from the outer solution of the main problem (\ref{epral}), $\epsilon \mapsto u_{d_{h_1}}(t_{1},t_{2},z,\epsilon)$ is the 
$\lambda_{2}k_{2}-$sum of a common formal power series $\hat{u}^{0}(\epsilon) \in \mathbb{E}_{2}[[\epsilon]]$ on
$\mathcal{E}_{h_1}^{0}$, for $0 \leq h_1 \leq \iota_{1}-1$.
\end{theo}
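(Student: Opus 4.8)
The plan is to invoke the Ramis--Sibuya theorem (Theorem (RS)) separately for the two families of solutions, each in its appropriate Banach space. For the inner solutions, I would set $G_{h_2}(\epsilon):=u_{d_{h_2}}(t_1,x_2\epsilon^{-\mu_2}e^{\sqrt{-1}\theta_{h_2}},z,\epsilon)$, viewed as an element of $\mathbb{E}_1$ (holomorphic and bounded on $\mathcal{T}_1\times\chi_2^\infty\times H_{\beta'}$). By Theorem~\ref{lema495}, each such $G_{h_2}$ is holomorphic on $\mathcal{E}_{h_2}^\infty$, and the difference of consecutive solutions satisfies the exponential flatness estimate~(\ref{e495}), namely
\begin{equation*}
\left\|G_{h_2+1}(\epsilon)-G_{h_2}(\epsilon)\right\|_{\mathbb{E}_1}\le\tilde{C}\exp\left(\frac{\tilde{D}}{|\epsilon|^{\lambda_1k_1}}\right)
\end{equation*}
on $\mathcal{E}_{h_2}^\infty\cap\mathcal{E}_{h_2+1}^\infty$. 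Here I must take care that the sign in~(\ref{e495}) is read correctly as exponential decay: the constant $\tilde{D}$ must be negated so that the cocycle $\Theta_{h_2}(\epsilon)=G_{h_2+1}(\epsilon)-G_{h_2}(\epsilon)$ is exponentially flat of order $k=\lambda_1k_1$ in the sense of hypothesis~2) of Theorem (RS). Together with boundedness of each $G_{h_2}$ near the origin (which follows from the bounds of Theorem~\ref{lema495}), both hypotheses of the Ramis--Sibuya theorem are met, and I conclude the existence of a common Gevrey asymptotic expansion $\hat{u}^\infty(\epsilon)\in\mathbb{E}_1[[\epsilon]]$ of order $1/(\lambda_1k_1)$ on each $\mathcal{E}_{h_2}^\infty$.

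For the outer solutions, I would repeat the identical argument with $G_{h_1}(\epsilon):=u_{d_{h_1}}(t_1,t_2,z,\epsilon)$ regarded as an element of $\mathbb{E}_2$. By Theorem~\ref{teo3}, these are holomorphic and bounded on $\mathcal{E}_{h_1}^0$, and the consecutive-difference estimate~(\ref{e495b}) gives
\begin{equation*}
\left\|G_{h_1+1}(\epsilon)-G_{h_1}(\epsilon)\right\|_{\mathbb{E}_2}\le\hat{C}\exp\left(\frac{\hat{D}}{|\epsilon|^{\lambda_2k_2}}\right),
\end{equation*}
again to be read (with the appropriate sign of $\hat{D}$) as exponential flatness of order $k=\lambda_2k_2$. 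Applying Theorem (RS) once more yields a common formal series $\hat{u}^0(\epsilon)\in\mathbb{E}_2[[\epsilon]]$ which is the Gevrey asymptotic expansion of order $1/(\lambda_2k_2)$ of each outer solution on $\mathcal{E}_{h_1}^0$. The crucial upgrade here is the promotion to actual summability: because the good covering $(\mathcal{E}_{h_1}^0)_{0\le h_1\le\iota_1-1}$ was chosen in Definition~\ref{defi4} to have opening larger than $\pi/(\lambda_2k_2)$, the last clause of Theorem (RS) applies, so each outer solution is genuinely the $\lambda_2k_2$-sum of $\hat{u}^0(\epsilon)$ rather than merely an asymptotic representative.

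The proof is thus largely a bookkeeping assembly once the two difference estimates are in hand. The main obstacle, and the only genuinely delicate point, is to ensure that the Banach space in which one applies Ramis--Sibuya for the inner solutions is correctly $\mathbb{E}_1$: the solutions $u_{d_{h_2}}$ are naturally defined on the moving domain $\mathcal{T}_{2,\epsilon,\mu_2}$, which depends on $\epsilon$, so one must pass through the change of variable $t_2=x_2\epsilon^{-\mu_2}e^{\sqrt{-1}\theta_{h_2}}$ (as in Definition~\ref{defi481}) to express the maps as honest functions of the fixed variable $x_2\in\chi_2^\infty$, with $\epsilon$ appearing only as the analytic parameter. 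The estimate~(\ref{e495}) is stated precisely in these $x_2$-coordinates, which is exactly what is needed for the supremum norm on $\mathbb{E}_1$ to see the cocycle as exponentially flat in $\epsilon$; verifying that the family $(\mathcal{E}_{h_2}^\infty)$ indeed forms a good covering compatible with these flatness bounds completes the argument. No opening condition is required on this second covering, which is why the inner family produces only an asymptotic expansion and not, in general, a sum.
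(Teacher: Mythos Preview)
Your proposal is correct and follows essentially the same route as the paper: define $G_{h_2}$ (resp.\ $G_{h_1}$) as the $\mathbb{E}_1$- (resp.\ $\mathbb{E}_2$-)valued maps, feed the cocycle estimates (\ref{e495}) and (\ref{e495b}) into Theorem~(RS), and for the outer family exploit the opening condition $>\pi/(\lambda_2k_2)$ to upgrade to genuine $\lambda_2k_2$-summability. The only cosmetic difference is that the paper phrases the summability upgrade via Watson's Lemma rather than the final clause of (RS), and your remark about passing to the fixed $x_2$-coordinates so that $\mathbb{E}_1$ is an honest $\epsilon$-independent Banach space makes explicit a point the paper leaves implicit.
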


\begin{proof}
A consequence of the construction of the inner solutions in Section~\ref{secin} is that for all $0\le h_2\le \iota_2-1$, the function $\tilde{u}_{d_{h_2}}:\epsilon\mapsto u_{d_{h_2}}(t_1,\frac{x_2}{\epsilon^{\mu_2}}e^{\theta_{h_2}\sqrt{-1}},z,\epsilon)$ is a holomorphic map on $\mathcal{E}^{\infty}_{h_2}$, with values in the Banach space $\mathbb{E}_1$, for all $0\le h_2\le \iota_2-1$. Again, for every index, we consider the function $G_{h_2}:=\tilde{u}_{d_{h_2}}$ in (RS) Theorem. Due to Theorem~\ref{lema495}, we have that
$$\left\|G_{h_2+1}(\epsilon)-G_{h_2}(\epsilon)\right\|_{\mathbb{E}_1}\le\tilde{C}\exp\left(\frac{\tilde{D}}{|\epsilon|^{\lambda_1k_1}}\right),$$
for every $\epsilon\in\mathcal{E}^{\infty}_{h_2}\cap\mathcal{E}^{\infty}_{h_2+1}$. We apply (RS) Theorem in order to achieve the existence of a common formal power series $\hat{u}^{\infty}(\epsilon)\in\mathbb{E}_1[[\epsilon]]$ such that $\epsilon\mapsto u_{d_{h_2}}(t_1,\frac{x_2}{\epsilon^{\mu_2}}e^{\theta_{h_2}\sqrt{-1}},z,\epsilon)$ admits $\hat{u}^{\infty}(\epsilon)$ as its Gevrey asymptotic expansion of order $1/(\lambda_1 k_1)$, on $\mathcal{E}^{\infty}_{h_2}$, for every $0\le h_2\le\iota_2-1$.

For the second part of the proof, we consider the functions $\check{u}_{d_{h_1}}:\epsilon\mapsto u_{d_{h_1}}(t_1,t_2,z,\epsilon)$, for $0\le h_1\le\iota_1-1$, which turns out to be a holomorphic map on $\mathcal{E}^{0}_{h_1}$, with values in the Banach space $\mathbb{E}_2$. We put $G_{h_1}:=\check{u}_{d_{h_1}}$ in (RS) Theorem. Due to Theorem~\ref{lema495b}, we have that
$$\left\|G_{h_1+1}(\epsilon)-G_{h_1}(\epsilon)\right\|_{\mathbb{E}_1}\le\tilde{C}\exp\left(\frac{\tilde{D}}{|\epsilon|^{\lambda_2k_2}}\right),$$
for every $\epsilon\in\mathcal{E}^{0}_{h_1}\cap\mathcal{E}^{0}_{h_1+1}$. We apply (RS) Theorem in order to achieve the existence of a common formal power series $\hat{u}^{0}(\epsilon)\in\mathbb{E}_2[[\epsilon]]$ such that $\epsilon\mapsto u_{d_{h_1}}(t_1,t_2,z,\epsilon)$ admits $\hat{u}^{0}(\epsilon)$ as its Gevrey asymptotic expansion of order $1/(\lambda_2 k_2)$, on $\mathcal{E}^{0}_{h_1}$, for every $0\le h_1\le\iota_1-1$. Watson's Lemma (see~\cite{ba}) guarantees that this function is unique with this property, leading to a summability result. 
\end{proof}


\section{Proof of Lemma~\ref{lema3}}
It is straight to check statement 1) in Lemma~\ref{lema3}, due to $k'<k_2$.

We proceed to give proof to 2.a): let $\rho_2^\infty,\rho_1>0$. One has that
$$L_2(|T_1|,|T_2|)\le e^{-\frac{\rho^{k_1}}{|T_1|^{k_1}}\delta_1}\int_\rho^\infty e^{\nu r^{k'}}e^{-\frac{r^{k_2}}{|T_2|^{k_2}}\delta_2}dr\le e^{-\frac{\rho^{k_1}}{|T_1|^{k_1}}\delta_1}\int_0^\infty e^{\nu r^{k'}}e^{-\frac{r^{k_2}}{|T_2|^{k_2}}\delta_2}dr.$$
Let 
$$\mathcal{L}(x)=\int_0^\infty e^{\nu r^{k'}}e^{-\frac{r^{k_2}}{x}}dr.$$
We focus on determining upper bounds for $\mathcal{L}(x)$, as $x\to\infty$. Such bounds have already been observed in the proof of Proposition 4 of our recent contribution~\cite{lama19}. We provide a complete set of arguments for the sake of better readability. 

By dominated convergence, and the series representation of $\exp(\nu r^{k'})$ we derive that
$$\mathcal{L}(x)=\int_0^{\infty}\sum_{n\ge0}\frac{(\nu r^{k'})^n}{n!}e^{-\frac{r^{k_2}}{x}}dr=\sum_{n\ge0}\frac{\nu^n}{n!}\int_0^{\infty}(r^{k'})^ne^{-\frac{r^{k_2}}{x}}dr,$$
for all $x>0$.
Let 
$$\mathcal{L}_n(x)=\int_0^{\infty}(r^{k'})^ne^{-\frac{r^{k_2}}{x}}dr.$$
The change of variable $r^{k_2}/x=\tilde{r}$ yields
$$\mathcal{L}_n(x)=x^{\frac{k'}{k_2}n}x^{\frac{1}{k_2}}\frac{1}{k_2}\int_0^{\infty}(\tilde{r})^{\frac{k'}{k_2}n+\frac{1}{k_2}-1}e^{-\tilde{r}}d\tilde{r}=\frac{1}{k_2}x^{\frac{1}{k_2}}x^{\frac{k'}{k_2}n}\Gamma\left(\frac{k'}{k_2}n+\frac{1}{k_2} \right).$$
As a result, 
$$\mathcal{L}(x)=\frac{1}{k_2}x^{\frac{1}{k_2}}\sum_{n\ge0}\frac{(\nu x^{\frac{k'}{k_2}})^n}{n!}\Gamma\left(\frac{k'}{k_2}n+\frac{1}{k_2} \right)=\frac{1}{k_2}x^{\frac{1}{k_2}}\sum_{n\ge0}(\nu x^{\frac{k'}{k_2}})^n\frac{\Gamma\left(\frac{k'}{k_2}n+\frac{1}{k_2} \right)}{\Gamma(n+1)}.$$
We recall (see Appendix B in~\cite{ba2}) the Beta integral formula
$$B(\alpha,\beta)=\int_0^1(1-t)^{\alpha-1}t^{\beta-1}dt=\frac{\Gamma(\alpha)\Gamma(\beta)}{\Gamma(\alpha+\beta)}\le 1,\qquad \alpha,\beta\ge1.$$
Regrading the previous formula, we deduce the existence of a constant $C_1>0$ such that 
$$\Gamma\left(\frac{k'}{k_2}n+\frac{1}{k_2}\right)\Gamma\left(\left(1-\frac{k'}{k_2}\right)n+\left(1-\frac{1}{k_2}\right)\right)\le C_1\Gamma(n+1),\qquad n\ge \max\left\{\frac{k_2-1}{k'},\frac{1}{k_2-k'}\right\}.$$
This yields the existence of $\tilde{C}_1>0$ such that
$$\mathcal{L}(x)\le \tilde{C}_1x^{\frac{1}{k_2}}\sum_{n\ge0}\frac{(\nu x^{\frac{k'}{k_2}})^n}{\Gamma\left(\left(1-\frac{k'}{k_2}\right)n+\left(1-\frac{1}{k_2}\right)\right)}.$$
We recall the next bounds on the generalized Mittag-Leffler function (Wiman function), see~\cite{erdelyi}:
$$E_{\alpha,\beta}(z)=\sum_{n\ge0}\frac{z^n}{\Gamma(\beta+\alpha n)},\qquad \alpha,\beta>0, \alpha\in(0,2).$$
There exists $C_2>0$ such that
\begin{equation}\label{e502}
E_{\alpha,\beta}(z)\le C_2z^{\frac{1-\beta}{\alpha}}e^{z^{\frac{1}{\alpha}}},\qquad z\ge1.
\end{equation}
Regarding (\ref{e502}), we guarantee the existence of $C_3>0$ with
$$\mathcal{L}(x)\le C_3x^{\frac{1}{k_2}+\frac{1}{k_2-k'}}e^{\nu^{\frac{1}{1-\frac{k'}{k_2}}}x^{\frac{k'}{k_2-k'}}},\qquad x\ge 1.$$
As a result, we derive the formula in the statement 2.a).

We proceed to give upper bounds on $L_2(|T_1|,|T_2|)$ in the case 2.b). We assume that $|T_1|<\rho_1$ for some $\rho_1>0$, and $|T_2|<\rho_2$ for some $\rho_2>0$.

We write $L_2(|T_1|,|T_2|)=\exp(-\frac{\rho^{k_1}}{|T_1|^{k_1}}\delta_1)L_{2.1}(|T_2|^{k_2})$, where
$$L_{2.1}(|T_2|^{k_2})=\int_\rho^{\infty}e^{\nu r^{k'}}e^{-\frac{r^{k_2}}{|T_2|^{k_2}}\delta_2}dr.$$
We have
\begin{align*}
L_{2.1}(|T_2|^{k_2})&= \int_\rho^{\infty}e^{\nu r^{k'}}e^{-\frac{r^{k_2}}{2|T_2|^{k_2}}\delta_2}e^{-\frac{r^{k_2}}{2|T_2|^{k_2}}\delta_2}dr\\
&\le e^{-\frac{\rho^{k_2}}{2|T_2|^{k_2}}\delta_2}\int_\rho^{\infty}e^{\nu r^{k'}}e^{-\frac{r^{k_2}}{2|T_2|^{k_2}}\delta_2}dr\le e^{-\frac{\rho^{k_2}}{2|T_2|^{k_2}}\delta_2}\int_\rho^{\infty}e^{\nu r^{k'}}e^{-\frac{r^{k_2}}{2\rho_2^{k_2}}\delta_2}dr,
\end{align*}
being the last integral upper bounded by a positive constant, for $k_2>k'$. In conclusion, there exists some $C_{2.1}>0$ such that the statement in 2.b) holds.

\end{document}